\documentclass[10pt]{amsart}
\title{Geometry of representations of quantum planes}
\author{Kevin De Laet}
\address{Mathematics and statistics, Hasselt University \\
Agoralaan - Building D, B-3590 Diepenbeek (Belgium) \\ {\tt kevin.delaet@uhasselt.be}}
\date{}
\usepackage{amsmath,amsfonts,array,amscd,amsthm,amssymb,stackrel,verbatim,wrapfig,subfig,float}
\usepackage{enumerate}
\usepackage{url}
\usepackage{tikz}
\usepackage[all]{xy}
\usetikzlibrary{arrows,matrix}
\usetikzlibrary{shapes.geometric}
\usetikzlibrary{decorations.markings} 

\tikzset{
  vertice/.style={circle,draw=black},
  decoration={markings,mark=at position 0.5 with {\arrow{>}}}
}

\newcommand{\wis}[1]{{\text{\em \usefont{OT1}{cmtt}{m}{n} #1}}}
\newcommand{\C}{\mathbb{C}}
\newcommand{\N}{\mathbb{N}}
\newcommand{\Z}{\mathbb{Z}}

\newcommand{\PP}{\mathbb{P}}

\theoremstyle{plain}
\newtheorem{theorem}{Theorem}[section]

\newtheorem{proposition}[theorem]{Proposition}
\newtheorem{corollary}[theorem]{Corollary}
\newtheorem{remark}[theorem]{Remark}

\DeclareMathOperator{\Ext}{Ext}

\numberwithin{equation}{section}
\begin{document}
\sloppy

\begin{abstract}
The quantum plane $A=\mathbb{C}_\rho[x,y,z]$ with $\rho$ a root of unity has singularities in its representation variety $\wis{trep}_n~A$ and its center $\frac{\C[u,v,w,g]}{uvw-g^n}$. Using the technique of a noncommutative blow-up, we prove that this technique fails in contrast to the 3-dimensional Sklyanin algebras if we want to resolve the singularities in $\wis{trep}_n~A$. However, we will see that the singularity of the center in the origin can be made better using this technique.
\end{abstract}
\maketitle
\section{Introduction}
Some of the easiest algebras to work with in noncommutative algebraic geometry are the quantum spaces (or planes if you want to work projectively) $A=\mathbb{C}_\rho[x,y,z]$ with defining equations 
\[\begin{cases}
xy=\rho yx\\
yz=\rho zy\\
zx=\rho xz
\end{cases}
\]
These algebras are finite modules over their center if and only if $\rho$ is a root of unity, with the center in these cases generated by $x^n,y^n,z^n,xyz$. While these algebras are relatively easy to work with, they have a disadvantage concerning their (trace preserving) representation variety $\wis{trep}_n~A$: it is not smooth. The singular locus is given by all $\pi^{-1}(\mathfrak{m})$, where $\mathfrak{m} \in \wis{max}~Z(A)$ is a singularity of the center and $\xymatrix{\wis{trep}_n~A \ar@{->>}[r]^\pi& \wis{max}~Z(A)}$ is the \textit{GIT}-quotient map.

In this paper there will be given a review concerning the representations, local quivers and singularities of the center of these algebras. We will always assume that $(n,3)=1$ as this will ensure that there will be fat point modules in addition to point modules.

One of the tools available in commutative algebraic geometry to resolve singularities is the use of blow-ups. Similar to \cite{BocklandtSymens}, where the quantum plane $\mathbb{C}_{-1}[u,v]$ was blown-up in the unique singularity, and \cite{DeLaetLeBruyn}, where the 3-dimensional Sklyanin algebras where blown-up in their unique singularity, we will use the construction of a noncommutative blow-up in section 6 to define an algebra $B = A \oplus I t \oplus I^2 t^2\oplus \ldots$ with $I = (x,z)$ and $I=(x,y,z)$. The first main theorem will be

\begin{theorem}
For the blow-up algebra $B = A \oplus I t \oplus I^2 t^2\oplus \ldots$, with $I = (x,z)$, the exceptional locus above a point $(0,b,0,0) \in \wis{max}~Z(A),b \neq 0$ in the partial resolution
\[
\xymatrix{\wis{trep}^{ss}_n~B//\wis{PGL}_n \times \C^* \ar@{->>}[r]
 & \wis{max}~Z(A)} \]
 is smooth, except for 2 points, where the singularity type is given by $\C \times \C^2/\Z_n$.
\end{theorem}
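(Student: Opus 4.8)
The plan is to reduce the statement to an explicit computation in a local quiver moduli space, using the étale local structure of $\wis{trep}_n A$ near the relevant semisimple representation together with the effect of the noncommutative blow-up on that local model. First I would identify the closed orbit lying over $(0,b,0,0)$. Since $y^n\mapsto b\neq 0$ while $x^n,z^n,xyz\mapsto 0$, in such a representation $Y$ is invertible and $X,Z$ are nilpotent, and the defining relations force $X,Z$ to shift the $Y$-eigenspaces by $\rho^{\mp 1}$. The unique semisimple point is therefore $M=\bigoplus_{i=0}^{n-1}S_i$, where $S_i$ sends $(x,y,z)\mapsto(0,\rho^{-i}\beta,0)$ with $\beta^n=b$.

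A direct $2\times 2$ extension computation, imposing the three relations on $\begin{pmatrix}\ast&\ast\\0&\ast\end{pmatrix}$-shaped matrices, then shows $\dim\Ext^1(S_i,S_j)=1$ exactly when $j\equiv i\pm 1\pmod n$ (from $x$ and from $z$ respectively), with the $y$-direction supplying the self-extensions. Hence the local quiver is the McKay quiver $\tilde A_{n-1}$ of $\Z_n\subset \mathrm{SL}_2$ with dimension vector $\alpha=(1,\dots,1)$, and — using that $A=\operatorname{Jac}(W)$ with $W=xyz-\rho\,xzy$ is Calabi--Yau of dimension $3$ — the local model of $\wis{trep}_n A$ along the $v$-axis is an étale neighbourhood of $\C\times\C^2/\Z_n$: the extra $\C$ is the $v$-direction and the surface factor is the moduli of the preprojective algebra $\Pi(\tilde A_{n-1})$ at $\alpha$. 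This recovers the transverse $A_{n-1}$ singularity of the centre already visible from $uw=g^n$.

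Next I would translate the blow-up into this picture. A representation of $B=A\oplus It\oplus\cdots$ is a representation of $A$ together with the action of the degree-one generators $xt,zt$, and the $\C^*$ coming from the $t$-grading supplies a GIT linearization; restricting to the slice at $M$ turns this into a stability character $\theta$ for $\alpha$-dimensional representations of $\Pi(\tilde A_{n-1})$. I would then compute $\wis{rep}^{ss}_\alpha\Pi/\!/\wis{GL}_\alpha$ for this $\theta$ as a partial resolution of $\C^2/\Z_n$, identify the exceptional fibre over the singular point as the $\PP^1$ arising from $[xt:zt]$, show the total space is smooth along the open part of this $\PP^1$, and finally examine its two torus-fixed points — where only $x$, respectively only $z$, survives — computing the local structure there and showing it is again $\C\times\C^2/\Z_n$.

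I expect the main obstacle to be this middle step: pinning down the exact stability character produced by the blow-up and proving it is the \emph{non-generic} one. A generic $\theta$ would fully resolve the $A_{n-1}$ singularity into a chain of $n-1$ curves, so the entire content of the theorem — that the technique fails — is the assertion that the single $\C^*$ governing $[xt:zt]$ forces $\theta$ onto a wall, resolving the singularity only along the open $\PP^1$ and leaving the full $\C^2/\Z_n$ untouched at the two fixed points. Verifying this demands care in matching the $\C^*$-weights of $xt$ and $zt$ to the components of $\theta$ and in analysing the semistable locus at those two points directly.
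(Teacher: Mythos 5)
Your opening steps are fine and agree with the paper: the closed orbit over $(0,b,0,0)$ is $\bigoplus_i S_i$, and the local quiver is the doubled cyclic quiver with loops (this is exactly the computation in Section 3). The fatal step is the middle one: the assertion that, \'etale-locally at $M$, the blow-up becomes a stability character $\theta$ for $(1,\dots,1)$-dimensional representations of $\Pi(\tilde A_{n-1})$, so that the exceptional locus is a quiver-GIT partial resolution of $\C^2/\Z_n$. This cannot work, for a reason internal to your own framework. For \emph{every} $\theta$, generic or on a wall, the moduli space of $\theta$-semistable $\Pi(\tilde A_{n-1})$-representations is a symplectic (hence crepant) partial resolution of $\C^2/\Z_n$ dominated by the minimal resolution, and a two-dimensional symplectic singularity is du Val: the only singularities such a space can have are of type $A_k$, obtained by contracting subchains of the $A_{n-1}$-chain of $(-2)$-curves. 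If exactly one exceptional $\PP^1$ survives, its two singular points have types $A_j$ and $A_{n-2-j}$ with $j+(n-2-j)=n-2$. The theorem asserts something incompatible with this: both points carry a quotient singularity by the \emph{full} group $\Z_n$, with the weights made precise in the paper's Section 6 statement, namely $\diag(\rho^{\pm2},\rho^{\mp1})$, i.e.\ type $\tfrac1n(1,-2)$ --- non-Gorenstein, hence not du Val, for $n>2$. You can verify this answer commutatively: $Z(B)=\bigoplus_k (I^k\cap Z(A))t^k$, so the partial resolution is $\wis{proj}~Z(B)\to\wis{max}~Z(A)$, which transverse to the line $\mathbf{V}(x^n,z^n)$ is $(\operatorname{Bl}_0\C^2)/\Z_n\to\C^2/\Z_n$ (blow up upstairs, then quotient); on the chart with coordinates $(s,q/s)$ the residual $\Z_n$-action has weights $(1,-2)$, matching the paper and not of type $A_k$. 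So the obstacle you flagged --- ``pinning down the stability character'' --- is not a technical difficulty: no such $\theta$ exists, because the blow-up is neither crepant nor dominated by the minimal resolution (the same objection rules out a reformulation via $\theta$-stable $\Z_n$-constellations for the CY3 Jacobi algebra, which by Craw--Ishii only produce crepant models). Relatedly, your closing phrase about ``leaving the full $\C^2/\Z_n$ untouched'' conflates $\tfrac1n(1,-2)$ with the original $A_{n-1}=\tfrac1n(1,-1)$; the blow-up genuinely changes the singularity type.

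The conceptual source of the failure is that a semistable representation of $B$ over $\mathfrak{m}$ is not ``a representation of $A$ plus a linearization'': over the exceptional fibre the degree-zero generators act by $x,z\mapsto 0$, while the degree-one generators $X=xt$, $Z=zt$ act by matrices that are new, independent variables --- in standard form $X\mapsto ae_1$, $Z\mapsto be_2^{-1}e_1^{-1}$, with $[a:b]$ the coordinate on the exceptional $\PP^1$. The blow-up enlarges the moduli problem by new arrows; it does not vary the stability condition on the old one, which is precisely why it can produce non-crepant models. The paper's proof works directly with this enlarged data: it parametrizes the fibre by the standard forms above, computes the $\wis{PGL}_n\times\C^*$-stabilizer at each point --- trivial (or $\Z_2$ when $2\mid n$, still giving a smooth quotient) when $ab\neq 0$, and the cyclic group $\langle(e_2,\rho^{\mp1})\rangle\cong\Z_n$ at the two endpoints --- and reads off the local structure from the weights of this stabilizer acting on $N_{\wis{PGL}_n\times\C^*}$ (one weight-zero loop plus weights $\mp2,\pm1$, after using the relation $Xz=xZ$ and deleting one weight-zero arrow for the $\C^*$-action). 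That yields smoothness on the open part of the $\PP^1$ and $\C\times\C^2/\Z_n$ at $0$ and $\infty$. If you want a quiver-theoretic proof, the object to compute is this weighted local quiver of $B$ at each point of the exceptional fibre, not a stability condition on the local quiver of $A$.
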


However, while the use of a noncommutative blow-up works in the case of the 3-dimensional Sklyanin algebras that are finite over their center (see \cite{DeLaetLeBruyn}), this does not do anything considering the representation variety. As $B$ is a finite module over its center, it will define a coherent sheaf $\mathcal{B}$ of Cayley-Hamilton algebras over $\wis{proj}~Z(B)$, but the sections over affine open subsets will still be graded noncommutative algebras. Therefore, we can never hope to find resolutions of $\wis{trep}_n~A$, as we will always find sections of $\mathcal{B}$ that have a bad singularity at the origin.
The second main result will be 

\begin{theorem}
For the blow-up algebra $B = A \oplus I t \oplus I^2 t^2\oplus \ldots$, with $I = (x,y,z)$, the exceptional locus above the point $(0,0,0,0) \in \wis{max}~Z(A)$ in the partial resolution
\[
\xymatrix{\wis{trep}^{ss}_n~B//\wis{PGL}_n \times \C^* \ar@{->>}[r]
 & \wis{max}~Z(A)} \]
 is smooth, except for the union of 3 lines intersecting 2 by 2. On regular points of these lines, the singularity type is $\C \times \C^2/\Z_n$.
\end{theorem}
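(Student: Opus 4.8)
The plan is to describe the exceptional fibre $E$ over the origin explicitly, stratify it into the locus where the underlying $n$-dimensional representation is simple and the locus where it decomposes, and then compute an étale-local model at each stratum via local quivers. Since $A$ is connected graded and $I=(x,y,z)=A_+$, the associated graded ring $\mathrm{gr}_I A=\bigoplus_k A_{\ge k}/A_{\ge k+1}$ is canonically $A$ again; hence the fibre of the partial resolution over the origin is the GIT quotient parametrising the graded semisimple $n$-dimensional representations of $A$, i.e. the point variety of the projective quantum plane $\mathrm{Proj}~A$. Using the bilinearisation of the defining relations, the point scheme is cut out by $\det M(p)=(1-\rho^3)p_1p_2p_3=0$, and since $(n,3)=1$ forces $\rho^3\ne1$, this is exactly the union of the three coordinate lines of $\PP^2$; off these lines one has fat point modules of multiplicity $n$. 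Thus $E\cong\PP^2$, the three lines meet two by two in the coordinate points $(1:0:0),(0:1:0),(0:0:1)$, and everything reduces to analysing these two strata.

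First I would treat the generic stratum. Over a class $[\alpha:\beta:\gamma]$ with $\alpha\beta\gamma\ne0$ the corresponding representation is the fat point, i.e. the simple $n$-dimensional module given (up to the $\wis{PGL}_n\times\C^*$-action and scaling conventions) essentially by $x\mapsto\alpha Q$, $y\mapsto\beta P$, $z\mapsto\gamma(PQ)^{-1}$ with $P,Q$ the standard $\rho$-commuting pair. As this representation is simple it lies in the Azumaya locus, so over this open set $\wis{trep}^{ss}_n~B$ is a principal $\wis{PGL}_n\times\C^*$-bundle and the quotient is smooth. Consequently $E$ is smooth away from the three coordinate lines, which already isolates the singular locus.

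Next the singular stratum. Over a regular (non-vertex) point of a coordinate line, say $[\alpha:\beta:0]$ with $\alpha\beta\ne0$, the graded semisimple representation is forced to split; I would show it is the direct sum $T_0\oplus\cdots\oplus T_{n-1}$ of the $n$ distinct one-dimensional graded simples indexed by the $\sigma$-orbit, exactly as in the model computation over a generic point $(u_0,0,0,0)$ of a singular line of $Z(A)$, where $x$ acquires the $n$ eigenvalues $\zeta\rho^i$ and $y,z$ become nilpotent. The local quiver $Q_\xi$ then has $n$ vertices, and I would compute $\dim\Ext^1(T_i,T_j)$ directly from the commutation relations: $y$ raises and $z$ lowers the $x$-eigenvalue by $\rho$, so the only non-trivial extensions are $T_i\to T_{i+1}$ (through $y$) and $T_i\to T_{i-1}$ (through $z$), each one-dimensional. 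Hence $Q_\xi$ is the doubled $n$-cycle $\tilde A_{n-1}$ with dimension vector $(1,\dots,1)$, and the local quiver (Luna slice) description identifies the local model of the quotient with $\wis{iss}_{(1,\dots,1)}~\tilde A_{n-1}$ times the one remaining central modulus along the line. Since the representation space of the doubled cyclic quiver modulo its preprojective relation is the Kleinian singularity of type $A_{n-1}$, this local model is precisely $\C\times\C^2/\Z_n$, as claimed.

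Finally the three vertices $(1:0:0),(0:1:0),(0:0:1)$ must be excluded: there a second coordinate also vanishes, the two cyclic families of extensions meet, and the local quiver acquires extra arrows, producing a strictly worse singularity — which is why the statement only asserts the type $\C\times\C^2/\Z_n$ at regular points of the lines. The main obstacle I anticipate is making the first step rigorous: matching the $\C^*$-graded semisimple representations living on $E$ with the fat-point/point-module dichotomy, so that over the coordinate lines the class really is the decomposable sum $\bigoplus_i T_i$ rather than a simple module, and then checking that the trace/preprojective relation cuts the local quiver quotient down to the undeformed $A_{n-1}$ singularity rather than to a smoothing of it. Everything else is the bookkeeping of the $\Ext$-computation and the étale slice.
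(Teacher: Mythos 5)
Your identification of the exceptional fibre with $\wis{proj}~Z(A)\cong\PP^2$ and your treatment of the open stratum (fat-point modules are simple with trivial $\wis{PGL}_n\times\C^*$-stabilizer, hence the quotient is smooth there) agree with the paper. The gap is in your description of the closed orbits over \emph{regular} points of the three lines. Over $[\alpha:\beta:0]$ with $\alpha\beta\neq 0$ the representation with closed orbit is $x,y,z\mapsto 0$, $X\mapsto\alpha e_1$, $Y\mapsto\beta e_2$, $Z\mapsto 0$, and since $e_1,e_2$ generate $\wis{M}_n(\C)$ this is a \emph{simple} representation of $B$ with finite stabilizer $\langle(e_1e_2^{-1},\rho)\rangle\cong\Z_n$ in $\wis{PGL}_n\times\C^*$ --- exactly as the point $(a,b,0,0)$ with $ab\neq 0$ lies in $\wis{Azu}_n~A$. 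Decomposable semisimples occur only at the three vertices. Your ``model computation'' at $(u_0,0,0,0)$, where a single coordinate is nonzero and the module splits into eigenlines of $x$, is applied to the wrong stratum, so the doubled cycle $\tilde A_{n-1}$ is not the local quiver. A dimension count already rules it out: the paper shows $\wis{trep}^{ss}_n~B$ is smooth at these points (quoting Theorem 2 of \cite{DeLaetLeBruyn}), so the \'etale local model of the quotient is the honest invariant-theoretic quotient of the local quiver setting, with no relations available to cut it down; for the doubled $n$-cycle with dimension vector $(1,\dots,1)$ that quotient is the $(n+1)$-dimensional hypersurface $\{pq=c_1\cdots c_n\}$, incompatible with the $3$-dimensional quotient once $n>2$. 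Relations (preprojective or otherwise) enter the local model only where the representation variety is singular, so the ``obstacle'' you flag at the end cannot be overcome --- it is a symptom of having the wrong quiver.

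What the paper does instead is compute $N_{\wis{PGL}_n\times\C^*}$ as a representation of the finite stabilizer $\Z_n$. The normal space $N_{\wis{PGL}_n}$ is $4$-dimensional, not $2n$-dimensional: three directions $(e_1,0,0)$, $(0,e_2,0)$, $(0,0,e_2^{-1}e_1^{-1})$ deforming $X,Y,Z$, of weights $0,0,3$, plus a single direction of weight $-1$ coming from deformations of the \emph{degree-zero} generators $x,y$ (the mixed relation $xY=Xy$ in $B$ cuts those two deformations down to one, and $yZ=Yz$ kills the $z$-direction). Your computation works entirely inside $\mathrm{gr}_I A\cong A$, i.e. inside the fibre, and ignores $x,y,z$ altogether, which is precisely where this fourth direction lives; smoothness of the total space cannot be decided inside the fibre. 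Removing the $\C^*$-loop leaves $\C^3$ with $\Z_n$ acting as $\diag(1,\rho^3,\rho^{-1})$, so the singularity is $\C\times\C^2/\Z_n$ with $\Z_n$ acting by $\diag(\rho^3,\rho^{-1})$, i.e. the cyclic quotient of type $\tfrac1n(3,n-1)$. This is \emph{not} the Kleinian $A_{n-1}$ singularity $\tfrac1n(1,n-1)$ that your preprojective-algebra argument would produce: the two agree only when $3\equiv 1\bmod n$, and $\tfrac1n(3,n-1)$ is not even Gorenstein for $n>2$. Equivalently, if you insist on a covering (graded) picture with the $n$ point modules as vertices, the arrows go $i\to i+3$ and $i\to i-1$ --- the McKay quiver of $\diag(\rho^3,\rho^{-1})$, which is what the paper finds later for the sheaf $\mathcal{B}$ --- and not $i\to i\pm 1$. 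So although your final phrase ``$\C\times\C^2/\Z_n$'' matches the theorem's (underspecified) wording, the group action, the local quiver and the route to them are incorrect.
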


\section{Review of known results}
The results in this section are well known in any literature regarding the quantum planes, see for example \cite{ArtinQuantumPlanes}.
\begin{theorem}
The algebra $A=\mathbb{C}_\rho[x,y,z]$ is an Artin-Schelter regular algebra with Hilbert series $\frac{1}{(1-t)^3}$ and is a finite module over its center if and only if $\rho$ is a root of unity. In this case and if $\rho \neq 1$, the center of $A$ is generated by 4 homogeneous elements $x^n,y^n,z^n,xyz$ with $n$ the order of $\rho$ in $\mu_\infty$.
\end{theorem}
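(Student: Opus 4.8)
The plan is to prove the theorem in three parts, following the structure of the statement: Artin-Schelter regularity and the Hilbert series, the finiteness-over-center characterization, and the explicit description of the center.

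The plan is to treat the three assertions --- Artin--Schelter regularity with the stated Hilbert series, module-finiteness over the center exactly when $\rho\in\mu_\infty$, and the explicit set of central generators --- separately, since the first holds for every $\rho$ while the last two are the arithmetic heart of the statement.

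First I would establish the Hilbert series by exhibiting a $\C$-basis of ordered monomials. The three relations, read as the rewriting rules $yx\to\rho^{-1}xy$, $zy\to\rho^{-1}yz$ and $zx\to\rho xz$, form a noetherian reduction system, and I would check that the single overlap ambiguity on $zyx$ is resolvable; the Diamond Lemma then shows $\{x^ay^bz^c\}$ is a basis, whence $\dim A_d=\binom{d+2}{2}$ and $H_A(t)=\frac{1}{(1-t)^3}$. For regularity the cleanest route is to observe that $A$ is an iterated graded Ore extension $\C[x][z;\sigma_1][y;\sigma_2]$ with $\sigma_1(x)=\rho x$, $\sigma_2(x)=\rho^{-1}x$, $\sigma_2(z)=\rho z$ and no derivations; since a graded Ore extension of a connected graded AS-regular algebra by a degree-one variable is again AS-regular of one higher global dimension, starting from $\C$ gives that $A$ is AS-regular of dimension $3$. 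Equivalently one may verify directly that the Koszul complex
\[
0\to A(-3)\to A(-2)^{3}\to A(-1)^{3}\to A\to\C\to 0
\]
is a self-dual minimal resolution of the trivial module, which simultaneously re-proves the Hilbert series and the Gorenstein property. This part is uniform in $\rho$, and is where I expect the only genuinely technical input --- verifying the Gorenstein (self-duality) condition --- to appear.

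Next I would pin down which monomials are central. Using the commutation rules one computes that $x^ay^bz^c$ commutes with $x,y,z$ precisely when $\rho^{c-b}=\rho^{a-c}=\rho^{b-a}=1$. If $\rho$ is not a root of unity this forces $a=b=c$, so $Z(A)=\C[xyz]$ has Gelfand--Kirillov dimension $1$ whereas $A$ has Gelfand--Kirillov dimension $3$; hence $A$ cannot be a finite module over its center, giving the ``only if'' direction. If instead $\rho$ has finite order $n$, then $\rho^n=1$ shows immediately that $x^n,y^n,z^n$ and (by the cyclic symmetry $x\to y\to z\to x$ of the relations) $xyz$ are all central, while every monomial lies in the finite set of generators $\{x^ay^bz^c:0\le a,b,c<n\}$ over $\C[x^n,y^n,z^n]$; thus $A$ is module-finite over its center, completing the equivalence.

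Finally, for $\rho$ a primitive $n$-th root of unity with $n\ge 2$, the centrality criterion says the central monomials are exactly those with $a\equiv b\equiv c\pmod n$. Writing $r$ for the common residue in $\{0,\dots,n-1\}$, each nonnegative exponent satisfies $a,b,c\ge r$, so after pulling out $(xyz)^r$ --- which in normal form equals $x^ry^rz^r$ up to a nonzero scalar --- the remaining exponents $a-r,b-r,c-r$ are divisible by $n$, exhibiting $x^ay^bz^c$ as a scalar multiple of a monomial in $x^n,y^n,z^n$ and $xyz$. Hence $Z(A)=\C[x^n,y^n,z^n,xyz]$, and computing the single relation $(xyz)^n=\lambda\,x^ny^nz^n$ recovers the presentation $\frac{\C[u,v,w,g]}{uvw-g^n}$ recorded in the abstract. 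The main obstacle throughout is bookkeeping the $\rho$-powers accumulated while reordering monomials; once the normal form is fixed these are mechanical, so the substantive work really is the Gorenstein verification in the first step.
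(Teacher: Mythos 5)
The paper gives no proof of this statement at all: it is quoted as a review of known results, with a pointer to Artin's \emph{Geometry of quantum planes}, so there is no in-paper argument to compare yours against. Your proposal is a correct, self-contained substitute. The Diamond Lemma step is right (the single overlap $zyx$ reduces to $\rho^{-1}xyz$ along both reduction paths, so $\{x^ay^bz^c\}$ is a basis and $H_A(t)=\frac{1}{(1-t)^3}$); the iterated Ore extension $\C[x][z;\sigma_1][y;\sigma_2]$ is set up with the correct automorphisms, and invoking the standard fact that graded Ore extensions preserve AS-regularity is a legitimate way to settle regularity; the Gelfand--Kirillov dimension comparison ($Z(A)=\C[xyz]$ of GK-dimension $1$ versus GK-dimension $3$ for $A$) correctly rules out module-finiteness when $\rho$ is not a root of unity; and the residue argument pulling out $(xyz)^r$ correctly exhibits every central monomial inside $\C[x^n,y^n,z^n,xyz]$.

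One step you should make explicit: why the center is \emph{spanned} by central monomials. This is needed to pass from ``the central monomials are exactly those with $a\equiv b\equiv c \pmod n$'' to ``$Z(A)=\C[x^n,y^n,z^n,xyz]$''. It follows in one line from the $\Z^3$-grading of $A$ by multidegree: commutation with each generator sends a monomial to a scalar multiple of a distinct monomial, so if $f=\sum_m c_m m$ is central, each monomial in its support must already be central; but as written your argument silently assumes it. It is also worth noting that your monomial bookkeeping is essentially the same combinatorics the paper does carry out one theorem later, where it proves that $\C[x,y,z]^G$ is generated by $x^n,y^n,z^n,xyz$ by dividing a fixed monomial by powers of $xyz$ and testing divisibility of the exponents by $n$; so your route for the description of the center runs parallel to the paper's treatment of its invariant-theoretic counterpart, while your regularity and GK-dimension arguments supply the parts the paper delegates entirely to the literature.
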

It immediately follows that $Z(A)$ is isomorphic to $\mathbb{C}[u,v,w,g]/(uvw-g^n)$, which is an integral domain. This implies that
\begin{theorem}
The center of $A$ is the coordinate ring of $\C^3/G$, where $G$ is the subgroup of $\Z_n^3$ defined as the kernel of the map
$$
\xymatrix{\Z_n^3 \ar@{->>}[r]& \Z_n}, (a,b,c) \mapsto a+b+c
$$
\end{theorem}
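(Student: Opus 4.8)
The plan is to lean on the previous theorem, which identifies $Z(A)$ with $\C[u,v,w,g]/(uvw-g^n)$, and then to exhibit this ring as the invariant ring of a diagonal $G$-action on $\A^3 = \operatorname{Spec} \C[p,q,r]$. First I would fix a primitive $n$-th root of unity, namely $\rho$ itself, and let $\Z_n^3$ act on $\C^3$ by scaling coordinate functions: the element $(a,b,c)$ sends $p \mapsto \rho^a p$, $q \mapsto \rho^b q$, $r \mapsto \rho^c r$. Since $\rho$ has order exactly $n$, this action is faithful, hence so is its restriction to the index-$n$ subgroup $G = \ker(\Z_n^3 \to \Z_n)$; thus $\C^3/G$ is a genuine quotient and its coordinate ring is $\C[p,q,r]^G$.

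Because $G$ is finite and acts diagonally, the invariant ring is spanned by invariant monomials, so the crux is to identify them. A monomial $p^iq^jr^k$ is $G$-invariant exactly when $ai+bj+ck \equiv 0 \pmod n$ for every $(a,b,c)$ with $a+b+c \equiv 0 \pmod n$. Substituting $c \equiv -a-b$ turns this into the requirement $a(i-k)+b(j-k)\equiv 0$ for all $a,b \in \Z_n$, which holds if and only if $i \equiv j \equiv k \pmod n$; conversely that congruence gives $ai+bj+ck \equiv k(a+b+c) \equiv 0$. Writing $m$ for the common residue and factoring out $(pqr)^m$ then shows every invariant monomial lies in the subalgebra generated by $p^n, q^n, r^n$ and $pqr$, so that $\C[p,q,r]^G = \C[p^n,q^n,r^n,pqr]$.

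Finally I would send $u \mapsto p^n$, $v \mapsto q^n$, $w \mapsto r^n$, $g \mapsto pqr$, producing a surjection $\C[u,v,w,g] \twoheadrightarrow \C[p,q,r]^G$ whose kernel visibly contains $uvw-g^n$. Since $uvw-g^n$ is irreducible, the quotient $\C[u,v,w,g]/(uvw-g^n)$ is a $3$-dimensional integral domain, and the target is likewise a $3$-dimensional domain as it contains the algebraically independent elements $p^n,q^n,r^n$. A surjection between integral domains of equal Krull dimension is an isomorphism, so the induced map $Z(A) \cong \C[u,v,w,g]/(uvw-g^n) \to \C[p,q,r]^G$ is an isomorphism, which is the claim. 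I expect the only real obstacle to be confirming that the relation ideal is exactly the principal ideal $(uvw-g^n)$ — that is, that no further invariants or syzygies intrude — and the dimension-plus-domain argument dispatches this cleanly, sidestepping any explicit Gröbner or Hilbert-series bookkeeping.
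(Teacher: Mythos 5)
Your proposal is correct and follows essentially the same route as the paper: the same diagonal $G$-action on $\C^3$, reduction to invariant monomials, and the same conclusion that the invariant ring is $\C[p^n,q^n,r^n,pqr]$ (the paper's device of dividing by $xyz$ and testing against $(1,0,-1)$ and $(0,1,-1)$ is your congruence computation $i \equiv j \equiv k \pmod n$ in disguise). Your closing dimension-plus-domain argument, which pins the relation ideal down to exactly $(uvw-g^n)$, fills in a step the paper dispatches with ``from which the claim will follow,'' so your write-up is if anything slightly more complete.
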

\begin{proof}
Define the action of $G$ on $\C^3 = \C x + \C y+ \C z$ as 
$$(a,b,c) \cdot x = \rho^a x, (a,b,c) \cdot y = \rho^b y, (a,b,c) \cdot z = \rho^c z
$$
We will prove that $\C[\C^3/G]= \C[x,y,z]^G$ is generated by $x^n,y^n,z^n,xyz$, from which the claim will follow. It is clear that these 4 elements are $G$-invariant. Every monomial $x^ky^lz^m$ is a stable vector space for the action of $G$ and we may therefore assume that the monomial $x^ky^lz^m$ is fixed. Dividing by $xyz$, we may assume that one of the $k,l,m$ is 0, for example $m=0$. If $k$ is not divisible by $n$, the element $(1,0,-1) \in G$ will not fix $x^k y^l$, so $k$ is divisible by $n$. The same reasoning works for $l$ by using $(0,1,-1)$ and therefore $x^k y^l \in \C[x^n,y^n,z^n,xyz]$.
\end{proof}
Due to Galois descent and the fact that $Z(A)$ is integrally closed, we have a surjective trace map $\xymatrix{A \ar@{->>}[r]^{tr} & Z(A)}$ of degree 0 which turns $A$ into a graded Cayley-Hamilton algebra, that is, the couple $(A,tr)$ satisfies the following properties for all $a,b \in A$
\begin{itemize}
\item{$tr(ab) = tr(ba)$}
\item{$tr(1) = n$}
\item{$\chi_{n,a}(a) = 0$ with $\chi_{n,a}(X)$ the formal degree $n$ Cayley-Hamilton polynomial expressed in the traces of powers of $a$}
\end{itemize}
The object we want to study is the trace preserving representation variety $\wis{trep}_n~A$, which parametrizes all representations $A \stackrel{\phi}{\rightarrow} \wis{M}_n(\C)$ such that $\phi(tr(a))  = Tr(\phi(a))$, where $ \wis{M}_n(\C) \stackrel{Tr}{\longrightarrow} \mathbb{C}$ is the usual trace map. We have a natural action of $\wis{PGL}_n$ on this variety, given by conjugation. It follows from \cite{LBBook} that the \textit{GIT}-quotient $\wis{trep}_n~A/\wis{PGL}_n$ is isomorphic to $\wis{max}~Z(A)$, we will denote the corresponding quotient map by $\pi$. In order to describe $\wis{trep}_n~A$, it is useful to work with the representations of the Heisenberg group of order $n^3$. The description of this group by generators and relations is given by
\[
\langle e_1, e_2 | [e_1,e_2]e_1=e_1 [e_1,e_2], [e_1,e_2]e_2 = [e_1,e_2]e_2, e_1^n = e_2^n = 1 \rangle
\]
and this group has $\varphi(n)$ simple representations of dimension $n$, with $\phi$ the Euler totient function. These representations are determined by a primitive $n$th root of unity $\rho^k$ with $(k,n)=1$ and are given in the following way: let $V_{\rho^k} = \oplus_{i=0}^{n-1}\mathbb{C}x_i$, then the action of $H_n$ is defined by
\begin{align*}
e_1 \cdot x_i = x_{i-1} , e_2 \cdot x_i = \rho^{ki} x_i, i=0\ldots n-1
\end{align*}
indices taken $\bmod n$. For the corresponding group morphism $H_n \stackrel{\psi_k}{\rightarrow} \wis{GL}_n$, it is easily checked that $[e_1,e_2]$ is send to $\rho^k I_n$. Using these representations, one finds that
\begin{theorem}
Let $(a,b,c)\in \wis{max}~\mathbb{C}[x^n,y^n,z^n]$, $(n,2)=1$ such that $abc \neq 0$ and choose $n$th roots of $a,b,c$, say $\alpha,\beta,\gamma$, then the $n$ corresponding orbits of semi-simple representations lying above this point have representatives of the form \\ $A \xrightarrow{\phi_{(\alpha,\beta,\gamma)}^j} \wis{M}_n(\C), j = 0,\ldots,n-1$ determined by
\[
\phi_{(\alpha,\beta,\gamma)}^j(x) = \alpha \psi_1(e_1),\phi_{(\alpha,\beta,\gamma)}^j(y) = \beta \psi_1(e_2),\phi_{(\alpha,\beta,\gamma)}^j(z) = \gamma \rho^j \psi_1(e_2^{-1}e_1^{-1})
\]
If one of them is 0, for example $c=0$, then there is a unique orbit lying above the corresponding point with representative $A \xrightarrow{\phi} \wis{M}_n(\C), j = 0,\ldots,n-1$ determined by
\[
\phi(x) = \alpha \psi_1(e_1),\phi(y) = \beta \psi_1(e_2),\phi(z) = 0
\]
If 2 of them are 0, for example $a=c=0$, then there is a unique orbit of semi-simple representations lying above the corresponding point determined by 
\[
\phi(x) = 0,\phi(y) = \beta \psi_1(e_2),\phi(z) = 0
\]
When $(a,b,c) = (0,0,0)$, then the corresponding semi-simple representation is the trivial representation.
\label{th:normalform}
\end{theorem}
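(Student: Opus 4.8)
The plan is to leverage the identification $\wis{trep}_n~A/\wis{PGL}_n \cong \wis{max}~Z(A)$ recalled above: closed $\wis{PGL}_n$-orbits (equivalently, semi-simple representations) correspond bijectively to points of $\wis{max}~Z(A)$, so over \emph{each} point of $\wis{max}~Z(A)$ there is exactly one semi-simple orbit. Since the finite map induced by $\C[x^n,y^n,z^n]\hookrightarrow Z(A)$ sends such an orbit to $(x^n,y^n,z^n)=(a,b,c)\in\wis{max}~\C[x^n,y^n,z^n]=\C^3$, counting the semi-simple orbits above $(a,b,c)$ amounts to counting the points of $\wis{max}~Z(A)=\{uvw=g^n\}$ lying over it. When $abc\neq 0$ there are exactly $n$ such points, namely $(a,b,c,d)$ with $d^n=abc$; when at least one coordinate vanishes, $g^n=uvw=0$ forces $g=0$ and the fibre is a single point. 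This already yields the stated counts ($n$ orbits, resp.\ a unique orbit), so the remaining task is to exhibit an explicit semi-simple trace-preserving representative sitting over each of these central points.

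First I would verify that each $\phi_{(\alpha,\beta,\gamma)}^j$ is a representation of $A$. From the action of $\psi_1$ one gets $\psi_1(e_1)\psi_1(e_2)=\rho\,\psi_1(e_2)\psi_1(e_1)$ and, writing $W:=\psi_1(e_2^{-1}e_1^{-1})$, the relations $\psi_1(e_2)W=\rho\,W\psi_1(e_2)$ and $W\psi_1(e_1)=\rho\,\psi_1(e_1)W$; hence $X=\alpha\psi_1(e_1)$, $Y=\beta\psi_1(e_2)$, $Z=\gamma\rho^j W$ satisfy $XY=\rho YX$, $YZ=\rho ZY$, $ZX=\rho XZ$. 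Using $\psi_1(e_1^n)=\psi_1(e_2^n)=I_n$ gives $X^n=aI_n$ and $Y^n=bI_n$, while a direct computation yields $W^n=\rho^{-n(n+1)/2}I_n$, which equals $I_n$ precisely because $(n,2)=1$ (this is where that hypothesis enters); thus $Z^n=cI_n$ and $XYZ=\alpha\beta\gamma\rho^j\,\psi_1(e_1e_2e_2^{-1}e_1^{-1})=\alpha\beta\gamma\rho^j I_n$. Therefore $\phi^j$ lies over $(a,b,c,\alpha\beta\gamma\rho^j)$, and as $j=0,\dots,n-1$ these exhaust the $n$ lifts $d=\alpha\beta\gamma\rho^j$ counted above. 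Since the clock-and-shift matrices $\psi_1(e_1),\psi_1(e_2)$ already span $\wis{M}_n(\C)$, each $\phi^j$ is simple, hence a closed orbit. The three degenerate formulas visibly satisfy the defining relations with the correct central values and are semi-simple as well: the one with $z\mapsto 0$ is simple over the quotient $A/(z)\cong\C_\rho[x,y]$ (as $\alpha,\beta\neq 0$), the one with $x,z\mapsto 0$ is a sum of $n$ distinct characters of $\C[y]$, and the last is trivial. By the uniqueness from the first paragraph, each is therefore \emph{the} semi-simple orbit over its point.

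The step I expect to be the main obstacle is checking that these maps are genuinely trace-preserving, i.e.\ that $Tr(\phi(w))=\phi(tr(w))$ for all $w$, rather than mere representations; everything else is bookkeeping once this holds. It suffices to treat monomials $x^iy^kz^l$. On one hand the $\N^3$-grading shows that $tr$ annihilates every non-central monomial, so for these I must prove $Tr(\phi^j(x^iy^kz^l))=0$, while for central monomials (those with $i\equiv k\equiv l\pmod n$) both sides collapse to $n$ times the common central value. On the other hand $\phi^j(x^iy^kz^l)$ is a scalar multiple of $\psi_1(g)$ for the word $g=e_1^i e_2^k (e_2^{-1}e_1^{-1})^l$, and the character theory of the Heisenberg group gives $Tr(\psi_1(g))=0$ unless $g$ lies in the centre $\langle[e_1,e_2]\rangle$. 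The image of $g$ in $H_n/Z(H_n)\cong\Z_n^2$ is $(i-l,\,k-l)$, which vanishes exactly when $i\equiv k\equiv l\pmod n$, i.e.\ exactly when $x^iy^kz^l$ is central; in that central case one checks the remaining scalars agree. Matching the two computations establishes the trace-preserving property, and with it the theorem.
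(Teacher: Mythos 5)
Your proposal is correct, but there is nothing in the paper to compare it against: Theorem \ref{th:normalform} sits in the review section, which opens by declaring its contents ``well known in any literature regarding the quantum planes'' with a pointer to \cite{ArtinQuantumPlanes}, and the representations are simply written down without proof. So your argument stands on its own, and it is a clean one. Where the cited literature obtains this classification by directly analysing the simple and (fat) point modules of the quantum plane, you get exhaustiveness for free from the identification $\wis{trep}_n~A//\wis{PGL}_n\cong\wis{max}~Z(A)$ (Procesi's theorem for Cayley--Hamilton algebras, which the paper itself recalls from \cite{LBBook}): exactly one closed orbit per central point, and the fibre of $\{uvw=g^n\}\rightarrow\C^3$ has $n$ points when $abc\neq 0$ and one point otherwise, so it only remains to exhibit a representative over each central point. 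Your verifications check out: the commutation relations among $\psi_1(e_1)$, $\psi_1(e_2)$ and $W=\psi_1(e_2^{-1}e_1^{-1})$ are right; your scalar $W^n=\rho^{-n(n+1)/2}I_n$ agrees with the paper's later remark that $(e_2^{-1}e_1^{-1})^n=[e_1,e_2]^{n(n-1)/2}$, since the two exponents differ by $n^2$ and $\rho^{n^2}=1$; and your trace-preservation argument is sound --- $tr$ is $\N^3$-homogeneous because the scaling torus acts by algebra automorphisms and the reduced trace commutes with automorphisms, so $tr$ kills non-central monomials, while the character of the $n$-dimensional Heisenberg irreducible vanishes off the centre, killing exactly the same monomials on the matrix side. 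Two points worth making explicit in a write-up: the bijection between points of $\wis{max}~Z(A)$ and isomorphism classes of semi-simple trace-preserving representations should carry its citation to \cite{LBBook}, and the trace check must also be run on the three degenerate representatives (it works, e.g.\ $Tr(\beta^k\psi_1(e_2)^k)=\beta^k\sum_i\rho^{ki}=0$ for $n\nmid k$), since semi-simplicity alone does not guarantee trace preservation.
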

We will call a semi-simple representation of $A$ given in standard form if it is determined by a triple $(\alpha e_1,\beta e_2,\gamma e_2^{-1}e_1^{-1})$. This standard form is not necessarily unique.

The fact that for every point of the open set $abc \neq 0$ there are $n$ different simple representations lying above the corresponding point of $\wis{max}~\mathbb{C}[x^n,y^n,z^n]$ is a consequence of $Z(A)$ being an extension of degree $n$ over $\mathbb{C}[x^n,y^n,z^n]$. In order to find the representations in the case that $(2,n)=2$, one has to take $\gamma=\sqrt[n]{-c}$ instead of $\gamma=\sqrt[n]{c}$ as we have that
$$
(e_2^{-1} e_1^{-1})^n = [e_1,e_2]^{\frac{n(n-1)}{2}}
$$
and $\frac{n(n-1)}{2}$ is only divisible by $n$ if $2$ does not divide $n$.

Using these representations of $A$, it is easy to see that the non-Azumaya locus of $Z(A)$ is determined by the 3 lines 
\begin{itemize}
\item $x^n=0, y^n = 0$,
\item $x^n=0,z^n=0$ and
\item $y^n=0,z^n=0$.
\end{itemize}  
Another way to describe the Azumaya locus is
\begin{corollary}
A point $p \in \wis{max}~Z(A)$ belongs to $\wis{Azu}_n~A$ if and only if $\pi^{-1}(p)$ is isomorphic to $\mathcal{O}(V_\rho) \subset \wis{rep}_n~H_n$.
\end{corollary}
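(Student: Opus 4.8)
The plan is to exploit the standard dictionary between Azumaya points and simple representations for Cayley--Hamilton algebras, combined with the explicit normal forms of Theorem \ref{th:normalform}. First I would recall from \cite{LBBook} that for the Cayley--Hamilton algebra $(A,tr)$ the fibre $\pi^{-1}(p)$ over any $p \in \wis{max}~Z(A)$ contains a unique closed orbit, namely the orbit of the semi-simple representation $M_p$ lying above $p$, and that $p$ belongs to $\wis{Azu}_n~A$ exactly when $M_p$ is a simple $n$-dimensional representation; in that case the whole fibre $\pi^{-1}(p)$ equals this single orbit. Since $V_\rho = \psi_1$ is by construction an irreducible $H_n$-representation, its stabiliser in $\wis{PGL}_n$ is trivial by Schur's lemma, so $\mathcal{O}(V_\rho) \cong \wis{PGL}_n$. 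The whole argument therefore reduces to comparing $\pi^{-1}(p)$ with the free orbit $\wis{PGL}_n$.

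For the implication $p \in \wis{Azu}_n~A \Rightarrow \pi^{-1}(p) \cong \mathcal{O}(V_\rho)$, I would note that an Azumaya point lies off the three lines forming the non-Azumaya locus, so at most one of the relevant central coordinates vanishes and Theorem \ref{th:normalform} shows $M_p$ is conjugate to some $\phi^j_{(\alpha,\beta,\gamma)}$, built from $\psi_1$ and hence simple. Being simple, $M_p$ has trivial $\wis{PGL}_n$-stabiliser, so the orbit map $\wis{PGL}_n \to \pi^{-1}(p) = \mathcal{O}(M_p)$ is an isomorphism; combined with $\mathcal{O}(V_\rho) \cong \wis{PGL}_n$ this yields $\pi^{-1}(p) \cong \mathcal{O}(V_\rho)$. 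When $\alpha\beta\gamma \neq 0$ the recipe $x \mapsto \alpha\psi_1(e_1)$, $y \mapsto \beta\psi_1(e_2)$, $z \mapsto \gamma\rho^j\psi_1(e_2^{-1}e_1^{-1})$ even makes this identification $\wis{PGL}_n$-equivariant by matching the base points $M_p$ and $V_\rho$ of the two $\wis{PGL}_n$-torsors.

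For the converse I would assume $\pi^{-1}(p) \cong \mathcal{O}(V_\rho) \cong \wis{PGL}_n$ and rule out $p$ being non-Azumaya. If $p$ lay on one of the three lines, then at least two of the central coordinates vanish and Theorem \ref{th:normalform} presents $M_p$ as a direct sum of strictly lower-dimensional representations (in the extreme case a sum of $n$ one-dimensional ones). Such an $M_p$ has a positive-dimensional stabiliser, so the unique closed orbit in $\pi^{-1}(p)$ has dimension strictly below $n^2-1$, whereas $\wis{PGL}_n$ is irreducible of dimension $n^2-1$. This dimension mismatch on the closed orbit already obstructs an isomorphism of the fibre with $\wis{PGL}_n$, forcing $p \in \wis{Azu}_n~A$.

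The step I expect to be the main obstacle is this converse, specifically certifying that no fibre over the non-Azumaya locus can be \emph{abstractly} isomorphic to $\wis{PGL}_n$; phrasing the obstruction through the dimension of the \emph{closed} orbit (which is intrinsic and $<n^2-1$), rather than through the possibly reducible fibre, is the clean way to close it. The auxiliary facts needed along the way---that $\psi_1$ is irreducible and that each standard-form representation with $abc \neq 0$ is simple---follow at once from the observation that a full cyclic permutation $\psi_1(e_1)$ together with the diagonal matrix $\psi_1(e_2)$ of distinct eigenvalues generate all of $\wis{M}_n(\C)$.
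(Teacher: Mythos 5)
Your overall route is the one the paper implicitly takes (the corollary is stated there without proof, as a direct consequence of Theorem \ref{th:normalform}): a point is Azumaya exactly when the fibre is the orbit of a simple representation in standard form, and that orbit is a free $\wis{PGL}_n$-orbit, hence a copy of $\mathcal{O}(V_\rho)\cong \wis{PGL}_n$. Your forward direction is correct and complete, including the observation that matching base points of two free transitive $\wis{PGL}_n$-spaces makes the identification equivariant.

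The weak point is exactly the step you single out, and your fix for it does not work as stated. The dimension of the unique closed orbit is \emph{not} an invariant of the abstract variety $\pi^{-1}(p)$: the orbit stratification is extra structure coming from the group action, and an abstract isomorphism of varieties need not preserve it (for instance, $\C$ with the scaling $\C^*$-action and $\C$ with the trivial action are abstractly isomorphic, yet have entirely different orbit structures). So your parenthetical ``which is intrinsic'' is only justified if the corollary is read as asking for an isomorphism of $\wis{PGL}_n$-varieties --- which is surely the intended reading --- but under that reading your converse collapses to something much simpler, needing no dimension count at all: $\mathcal{O}(V_\rho)$ is a single orbit with trivial stabilizers, while over a non-Azumaya point the closed orbit of the decomposable semi-simple representation has stabilizer containing a torus $(\C^*)^{n-1}$ (or all of $\wis{PGL}_n$ at the origin), and an equivariant isomorphism preserves orbits and stabilizers. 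If you insist on an abstract isomorphism of varieties, the gap can be closed, but only by computing something about the actual fibre: for example, over $(0,b,0,0)$ with $b\neq 0$, diagonalizing $\phi(y)$ forces $\phi(x)$ and $\phi(z)$ to be weighted shift matrices, and the quantum relations together with the trace conditions show that the fibre is a union of several irreducible components of dimension $n^2$; being reducible and of dimension $n^2>n^2-1$, it cannot be abstractly isomorphic to the smooth irreducible $(n^2-1)$-dimensional variety $\wis{PGL}_n$. Either state and prove the equivariant version, or supply such a computation; as written, the converse rests on a false invariance claim.
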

As the representation $V_\rho$ of $H_n$ is faithful, we will identify $e_i$ and the matrix $\psi(e_i)$ from now on.
\section{$\wis{trep}_n~A$}
In this section, we look at the singularities of $\wis{trep}_n~A$. In contrast to the Sklyanin algebras, where there is only a single singularity corresponding to the trivial representation, there are more singularities to consider.

The dimension of $\wis{trep}_n~A$ is $n^2+2$, which follows from the fact that the quotient map $\xymatrix{\wis{trep}_n(A)\ar@{->>}[r]^{\pi}&\wis{max}~Z(A)}$ is a principal $\wis{PGL}_n$-fibration over $\wis{Azu}_n~A$, which is an open subset of $\wis{max}~Z(A)$.
\begin{theorem}
$\wis{trep}_n~A$ is singular in $p$ if and only if $\wis{max}~Z(A)$ is singular in $\pi(p)$.
\end{theorem}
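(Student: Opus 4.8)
The plan is to show that the smooth locus of $\wis{trep}_n~A$ is exactly the preimage $\pi^{-1}(\wis{Azu}_n~A)$ of the Azumaya locus, which by the Jacobian criterion applied to $uvw=g^n$ coincides with the smooth locus of $\wis{max}~Z(A)$, namely the complement of the three lines listed above. One inclusion is immediate: if $\pi(p)$ is a smooth point of $Z(A)$ then $\pi(p)\in\wis{Azu}_n~A$, and since $\pi$ restricts to a principal $\wis{PGL}_n$-fibration over the Azumaya locus, $\wis{trep}_n~A$ is smooth at $p$, being the total space of a principal bundle over a smooth base. This already yields the direction ``$p$ singular $\Rightarrow\pi(p)$ singular''.

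For the converse I would localize at the closed orbit. Fix $m\in\wis{max}~Z(A)$ on one of the three lines and let $\phi$ be the semisimple representation of Theorem~\ref{th:normalform} lying over $m$; by the normal form $\phi$ is a direct sum of $n$ pairwise non-isomorphic one-dimensional simples $S_0,\dots,S_{n-1}$ when $m$ lies on a punctured line, and is the trivial representation with multiplicity $n$ when $m$ is the origin. By the Luna slice theorem in the trace preserving setting (see \cite{LBBook}), \'etale locally around the orbit of $\phi$ the pair $(\wis{trep}_n~A,\pi)$ is modelled on $(\wis{GL}_n\times_{H}S,\,S/\!\!/H)$, where $H=\mathrm{Stab}(\phi)$ and $S$ is the trace preserving representation variety of the local (marked) quiver $Q_m$ attached to $\phi$. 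Smoothness of $\wis{trep}_n~A$ along $\pi^{-1}(m)$ is then equivalent to smoothness of $S$ at its cone point $0$.

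The key computation is the determination of $Q_m$ through the groups $\Ext^1_A(S_i,S_j)$, which I would read off from the Koszul resolution of the simples of the AS-regular algebra $A$. For a point on a punctured line one finds that $Q_m$ is the cyclic quiver on the vertices $\{0,\dots,n-1\}$ with one loop at each vertex (the $y$-direction, $\Ext^1_A(S_i,S_i)$) together with arrows running around the $n$-cycle in both directions (the $x$- and $z$-directions); for $m$ the origin, $Q_m$ is the one-vertex quiver with three loops and dimension vector $n$, so that $S$ is a cone over $\wis{trep}_n~A$ itself. Because $A$ is the Jacobian algebra of the potential $W=xyz-\rho\,yxz$, it is $3$-Calabi--Yau and the defining relations of $S$ are cut out by an induced superpotential on $Q_m$; these relations have no linear part, so the Zariski tangent space $T_0S$ is the full arrow space of $Q_m$. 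Comparing this with $\dim S=n+2$ (obtained from $\dim\wis{trep}_n~A=n^2+2$, $\dim\wis{GL}_n=n^2$ and $\dim H=n$) shows that $T_0S$ is strictly larger than $\dim S$ for every $n\geq 2$, and likewise at the origin $S$ is the singular cone $\wis{trep}_n~A$; hence $S$, and therefore $\wis{trep}_n~A$, is singular at $\phi$.

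The hard part will be twofold. First, one must keep careful track of the trace preserving conditions when counting the arrows of $Q_m$ and computing $\dim S$, so that the tangent-space inequality is exact rather than merely plausible; this is where the marking of the loops and the bookkeeping of the central (trace) directions enter. Second, one must propagate singularity from the closed orbit of $\phi$ to \emph{every} point of the fibre $\pi^{-1}(m)$, not just the semisimple point. For this I would invoke that, for a Cayley--Hamilton algebra, smoothness of $\wis{trep}_n~A$ at a point is governed by the local quiver datum of the unique closed orbit in the closure of its $\wis{PGL}_n$-orbit (see \cite{LBBook}); since this datum is constant along $\pi^{-1}(m)$, the non-smooth locus is a union of fibres, and having treated $\phi$ suffices.
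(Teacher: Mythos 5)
Your first direction coincides with the paper's: smooth points of $\wis{max}~Z(A)$ lie in $\wis{Azu}_n~A$, and over the Azumaya locus $\pi$ is a principal $\wis{PGL}_n$-fibration, so $\wis{trep}_n~A$ is smooth there. For the converse \emph{at the semisimple representation} your slice argument is sound, but note it is not the paper's proof: it is, in substance, the defect computation the paper performs in the theorem immediately following this one, while the paper's actual proof instead quotes the classification of central singularities of Cayley-smooth orders (over a $3$-dimensional center only the conifold, which is an isolated singularity, can occur), so the $1$-dimensional singular locus of $Z(A)$ at once forbids Cayley-smoothness above it. Your route is more self-contained but requires the \emph{trace preserving} normal space: on a punctured line the $n$ loops you list are cut down by the conditions $\mathrm{Tr}(e_2^jB)=0$, $0\leq j\leq n-2$, to the single direction $B=te_2$, so $N=\Ext_A^{tr}(M,M)$ has dimension $2n+1$ (cf.\ Proposition~\ref{prop:normaldecomposition}); then $\dim T_M\wis{trep}_n~A=(2n+1)+(n^2-n)=n^2+n+1>n^2+2$, and at the origin $3(n^2-1)>n^2+2$. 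The latter count also repairs the circularity in your sentence ``at the origin $S$ is the singular cone $\wis{trep}_n~A$'', which as phrased assumes what is to be proved.

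The genuine gap is your final propagation step. The principle you invoke --- that smoothness of $\wis{trep}_n~A$ at $p$ is governed by the local quiver datum of the closed orbit in $\overline{\mathcal{O}(p)}$, so that the non-smooth locus is a union of $\pi$-fibres --- is not a theorem of \cite{LBBook} and is in fact false. Luna's theorem gives an \'etale isomorphism of a \emph{saturated} neighbourhood of $\mathcal{O}(M)$ with $\wis{GL}_n\times^H S$, $H$ the stabilizer of $M$; under it a point $p$ of the fibre corresponds to a point $s\in S$ which is in general not the cone point, and smoothness at $p$ is equivalent to smoothness of $S$ at $s$, not at $0$. For the quantum plane this is fatal: over $\mathfrak{m}=(x^n,y^n-b,z^n,xyz)$, $b\neq 0$, the slice is (up to the loop direction) the variety $a_ic_i=\rho\,a_{i+1}c_{i+1}$ ($i$ taken mod $n$) in $\C^{2n}$, which is singular at $0$ but smooth, by the Jacobian criterion, at $(a;c)=(1,\dots,1,0;0,\dots,0)$. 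The corresponding module $x\mapsto\sum_{i<n}E_{i,i+1}$, $y\mapsto\beta e_2$, $z\mapsto 0$ is therefore a \emph{smooth} point of $\wis{trep}_n~A$ lying over a singular point of the center; for $n=2$ one checks directly that $(E_{12},\diag(\beta,-\beta),0)$ has tangent space of dimension exactly $6=n^2+2$. So the non-smooth locus is $\wis{PGL}_n$-stable but not $\pi$-saturated, and no argument can close your ``hard part'': the fibrewise statement itself fails at non-closed orbits. (The paper's proof has the same limitation --- its central-singularity argument only produces singular points at the semisimple representative of each bad fibre --- so the theorem is really the equality $\pi(\wis{sing}(\wis{trep}_n~A))=\wis{sing}(\wis{max}~Z(A))$; but your proposal elevates the fibrewise claim to a citable fact, and that is the step that would fail.)
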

\begin{proof}
We already know that the singularities of the center are given by the 3 lines $x^n=0, y^n = 0$, $x^n=0,z^n=0$ and $y^n=0,z^n=0$. From this it follows that for every point $q$ on these lines, $\wis{trep}_n~A$ has to be singular in $\pi^{-1}(q)$, since the only central singularities possible are isolated (for a 3-dimensional center, only the conifold singularity is possible). For any regular point $q$ of $\wis{max}~Z(A)$ we have that $q \in \wis{Azu}_n~A$ and so for every point in $\pi^{-1}(q)$ the dimension is $n^2+2$. The claim follows.
\end{proof}
We immediately find that the local quiver (see \cite{LBBook}) in a point of the Azumaya locus looks like
\begin{center}
\begin{tikzpicture}
   \node[vertice] (a) at (0,0) {1};
   
   \path[->] (a) edge[out=30,in=-45,loop] (a);
   \path[->] (a) edge[out=120,in=45,loop] (a);
   \path[->] (a) edge[out=225,in=150,loop] (a);   
\end{tikzpicture}
\end{center}
For a point belonging to the Azumaya locus we can even find a good description of $N=T_p \wis{trep}_n~A/T_p \mathcal{O}(p)$ as a subspace of $T_p \wis{trep}_n~A \subset \wis{M}_n(\C)^{\oplus 3}$.
\begin{proposition}
The normal $N = \Ext_A^{tr}(M,M)$ for a simple module $M$ given by a triple $a e_1, b e_2, c e_2^{-1}e_1^{-1}$  is isomorphic to the subspace of $T_M \wis{trep}_n~A \subset \wis{M}_n(\C)^{\oplus 3}$ generated by the triples $(e_1,0,0),(0,e_2,0),(0,0,e_2^{-1}e_1^{-1})$.
\label{prop:normaldecomposition}
\end{proposition}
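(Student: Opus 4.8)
The plan is to compute the trace-preserving self-extensions $\Ext_A^{tr}(M,M)$ directly and identify them with first-order deformations of the representation $\phi$ inside $\wis{trep}_n~A$. Since $M$ is a simple module lying in the Azumaya locus (as $abc \neq 0$), we know from the earlier discussion that $\pi^{-1}(\pi(M)) \cong \mathcal{O}(V_\rho)$ is a single closed orbit of dimension $n^2-1$, and that $\wis{trep}_n~A$ has dimension $n^2+2$ at this point. Therefore $N = T_M\wis{trep}_n~A / T_M\mathcal{O}(M)$ has dimension exactly $3$. The strategy is to exhibit three explicit tangent directions that survive in the quotient $N$ and show they are linearly independent, which by the dimension count forces them to span $N$.

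First I would set up the tangent space computation. A tangent vector to $\wis{trep}_n~A$ at $\phi$ is a triple $(X,Y,Z) \in \wis{M}_n(\C)^{\oplus 3}$ such that the map $x \mapsto \phi(x)+\varepsilon X$, etc., defines a trace-preserving representation modulo $\varepsilon^2$. Linearizing the defining relations $xy=\rho yx$, $yz = \rho zy$, $zx = \rho xz$ at $\phi$ gives three matrix equations in $(X,Y,Z)$; I would write these out using $\phi(x)=ae_1$, $\phi(y)=be_2$, $\phi(z)=c e_2^{-1}e_1^{-1}$ and the commutation rule $e_1e_2 = \rho^{-1} e_2 e_1$ (or whichever sign the conventions fix). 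The trace-preserving condition imposes the further linearized constraints coming from the Cayley--Hamilton trace relations; in the Azumaya setting these cut $\Ext_A^{tr}$ down from the ordinary $\Ext_A$. The proposed three directions $(e_1,0,0)$, $(0,e_2,0)$, $(0,0,e_2^{-1}e_1^{-1})$ correspond to independently rescaling the three generators, i.e.\ to moving the scalars $a,b,c$; I would verify they satisfy the linearized relations (which is immediate, since scaling a generator preserves the homogeneous quadratic relations) and the trace conditions.

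Next I would show these three directions are nonzero and independent in the quotient by the tangent space to the orbit $T_M\mathcal{O}(M)$, which consists of the commutator directions $([\xi,ae_1],[\xi,be_2],[\xi,ce_2^{-1}e_1^{-1}])$ for $\xi \in \mathfrak{pgl}_n$. The key point is that the three rescaling directions change the ``radial'' coordinates $a,b,c$ that descend to genuine coordinates $x^n,y^n,z^n$ on $\wis{max}~Z(A)$, whereas the orbit directions project to zero under $\pi$; hence the three candidates map to linearly independent tangent vectors at $\pi(M)$, forcing independence in $N$. The main obstacle I anticipate is bookkeeping the trace-preserving conditions carefully enough to be sure $\dim N = 3$ rather than something larger, and that the commutator (orbit) directions do not accidentally coincide with any rescaling direction; this requires using the faithfulness and irreducibility of the Heisenberg representation $V_\rho$ (so that the centralizer of $\{e_1,e_2\}$ is only the scalars, pinning down $\dim T_M\mathcal{O}(M) = n^2-1$). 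Once independence and the dimension count $n^2+2 - (n^2-1) = 3$ are in hand, the three triples span $N$ and the isomorphism in the statement follows.
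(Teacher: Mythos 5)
Your approach is essentially the paper's own: exhibit the three rescaling triples as tangent vectors (immediate, since the defining relations are homogeneous in each generator separately), show they stay independent after dividing out $T_M\mathcal{O}(M)$, and conclude from the dimension count $\dim N=(n^2+2)-(n^2-1)=3$. The paper argues the independence step by observing that the perturbed matrices $(a+a_1)e_1,(b+b_1)e_2,(c+c_1)e_2^{-1}e_1^{-1}$ are, for small $a_1,b_1,c_1$, again simple and non-isomorphic to $M$; your variant --- pushing forward along $\pi$, where orbit directions die and the rescaling directions hit independent tangent vectors of $\wis{max}~Z(A)$ --- is the same idea made slightly more precise.

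There is, however, one genuine gap: you assume $abc\neq 0$, presenting it as a consequence of simplicity, and it is not. By Theorem \ref{th:normalform} the triple $ae_1,be_2,ce_2^{-1}e_1^{-1}$ is simple as soon as \emph{two} of the three scalars are nonzero: for instance $c=0$, $ab\neq 0$ gives a simple module (already $e_1,e_2$ generate $\wis{M}_n(\C)$), and such points still lie in $\wis{Azu}_n~A$ because the non-Azumaya locus consists of the three lines where two of $x^n,y^n,z^n$ vanish. The proposition is stated for all simples in standard form, and the paper later applies it exactly in this degenerate situation (in the blow-up section, to modules with $\phi(x)=\phi(z)=0$). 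At such a point your independence argument breaks down as written: the differential of the coordinate $z^n$ in the direction $(0,0,e_2^{-1}e_1^{-1})$ is $nc^{n-1}=0$, so that direction pushes forward to zero in the coordinates $x^n,y^n,z^n$ you invoke. The repair is easy and worth making explicit: include the fourth central coordinate $xyz$, whose differential in that direction equals $ab\neq 0$, so the three pushforwards $(na^{n-1},0,0,0)$, $(0,nb^{n-1},0,0)$, $(0,0,0,ab)$ remain independent; your dimension count is unaffected, since these points are still smooth points of $\wis{trep}_n~A$ with orbit of dimension $n^2-1$.
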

\begin{proof}
For any $(a,b,c) \in \C^3$ such that the matrices $a e_1, b e_2, c e_2^{-1}e_1^{-1}$ form a simple representation, we have that for any choice of $a_1,b_1,c_1 \in \C$ the matrices
$$
a e_1+ \varepsilon a_1 e_1, b e_2+ \varepsilon b_1 e_2, c e_2^{-1}e_1^{-1}+ \varepsilon c_1 e_2^{-1}e_1^{-1}
$$
form a representation of $A$ over the dual numbers $\C[\varepsilon]/(\varepsilon^2)$. This means that  $(e_1,0,0),(0,e_2,0),(0,0,e_2^{-1}e_1^{-1}) \in T_M \wis{trep}_n~A$. The fact that these 3 matrix triples are linearly independent if we divide out $T_M \mathcal{O}(M)$ follows from the fact that, if we take $a_1,b_1,c_1$ small enough, the matrices 
$$
a e_1+ a_1 e_1, b e_2+ b_1 e_2, c e_2^{-1}e_1^{-1}+ c_1 e_2^{-1}e_1^{-1}
$$
are again a simple representation of $A$ non-isomorphic to $M$.
\end{proof}
In the point corresponding to the trivial representation of $A$, we have
\begin{center}
\begin{tikzpicture}
   \node[vertice] (a) at (0,0) {$n$};
   
   \path[->] (a) edge[out=30,in=-45,loop] node {$\bullet$} (a);
   \path[->] (a) edge[out=120,in=45,loop] node {$\bullet$} (a);
   \path[->] (a) edge[out=225,in=150,loop] node {$\bullet$} (a);   
   
\end{tikzpicture}
\end{center}
where the marked arrows correspond to matrices with trace 0. This is exactly the same as in the case of the Sklyanin algebras. The main difference with the Sklyanin algebras lies in the other singularities: consider for example the semi-simple representation $\psi$ given by sending $x$ and $z$ to $0$ and sending $y$ to $b e_2$. In order to find the tangent space to the corresponding point of $\wis{trep}_n~A$, we need to take traceless matrices $A,B,C$ and look at representations $f_\psi$ over $\C[\varepsilon]/(\varepsilon^2)$ of the form
\[
f_\psi(x)= \varepsilon A, f_\psi(y)=b e_2 + \varepsilon B, f_\psi(y)= \varepsilon C
\]
from which it follows that
\[
A=\begin{bmatrix}
0      & a_1       & 0        & \ldots & 0\\
0      & 0         & a_2      & \ddots & 0\\
\vdots & \vdots    & \ddots   & \ddots & \vdots \\
0      & 0         & \ldots   & 0      & a_{n-1}\\
a_n    & 0         & \ldots   & 0      & 0
\end{bmatrix},
C = 
\begin{bmatrix}
0      & 0        & \ldots   & 0      & c_n\\
c_1    & 0        & \ldots   & 0 & 0\\
\vdots & c_2      & \ddots   & \ddots & \vdots \\
0      & \vdots   & \ddots   & 0      & 0\\
0      & 0        & \ldots   & c_{n-1}& 0
\end{bmatrix}
\]
and $B$ a matrix for which $tr(B^k)=0$ for $1 \leq k \leq n-1$. This implies that the dimension of the tangent space in this point is $n^2-n+1$. In order to calculate the tangent space to the orbit of this point, we need to see what the action of $I_n+ \varepsilon T$ on the triple $(0,b e_2,0)$ is by conjugation with $T \in \wis{M}_n(\C)$. It follows that the tangent space to the orbit is given by triples $(0,D,0)$ with $D$ a $n\times n$-matrix with zeros on the diagonal. Consequently, we can take representatives for elements of $N$ to be given by triples of matrices $(A,B,C)$ such that $A$ and $C$ are as above and $B = t e_2$. From this we deduce that the local quiver in this point is given by
\begin{center}
\begin{tikzpicture}
   \node[vertice] (a) at (0,0) {$1$};
   \node[vertice] (b) at (2,0) {$1$};
   \node[vertice] (c) at (4,-2) {$1$};
   \node[vertice] (d) at (-2,-2) {$1$};
   
   \path[->] (a) edge[out=40,in=140] (b);
   \path[->] (b) edge[out=-140,in=-40] (a);
   \path[->] (b) edge[out=-20,in=100] (c);
   \path[->] (c) edge[out=150,in=-70] (b);
     
   \path[->] (a) edge[out=-110,in=30] (d);
   \path[->] (d) edge[out=80,in=-160] (a);   
   
   \path[->,dashed] (d) edge[out=20,in=160] (c);
   \path[->,dashed] (c) edge[out=-160,in=-20] (d);
   
   \path[->] (a) edge[out=180,in=90,loop] (a);
   
\end{tikzpicture}
\end{center}
In short, we have 2 classes of singularities in $\wis{trep}_n~A$ to study.

For a semi-simple $n$-dimensional representation $M= \oplus_{i=1}^k S_i^{e_i}$ with corresponding maximal ideal $\mathfrak{m}$ of the center  of $A$ one can associate a \textit{defect} against $A$ being Cayley-smooth in $\mathfrak{m}$.
\[
\wis{defect}_\mathfrak{m}(A)=\dim_\C \Ext_A^{tr}(M,M)+(n^2-\sum_{i=1}^k e_i^2)- \dim \wis{trep}_n~A
\]
From the local quivers, we obtain
\begin{theorem}
The defects for any semi-simple $n$-dimensional trace preserving representation are given by
\begin{itemize}
\item when $\mathfrak{m}$ does not belong to the singular locus of the center, $\wis{defect}_\mathfrak{m}(A)=0$
\item if $\mathfrak{m}$ belongs to the singular locus but is not equal to $(x^n,y^n,z^n,xyz)$, we have $\wis{defect}_\mathfrak{m}(A)=n-1$
\item if $\mathfrak{m}=(x^n,y^n,z^n,xyz)$, we have $\wis{defect}_\mathfrak{m}(A)=2n^2-5$
\end{itemize}
\end{theorem}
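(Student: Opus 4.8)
The plan is to feed the three local quivers computed above directly into the defect formula, handling its three cases in turn and using $\dim\wis{trep}_n~A=n^2+2$. For a semi-simple $M=\oplus_{i=1}^k S_i^{e_i}$ the term $\sum_i e_i^2$ is exactly the dimension of the stabilizer $\prod_i\wis{GL}_{e_i}$, while $\dim_\C\Ext_A^{tr}(M,M)$ is the dimension of the representation space of the associated marked quiver (see \cite{LBBook}): an arrow between distinct vertices contributes the product of the two vertex-dimensions, an unmarked loop at a vertex of dimension $d$ contributes $d^2$, and a marked loop contributes $d^2-1$, the trace-zero condition on that block accounting for the missing dimension. Once the markings are settled, each case is a one-line computation.

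I would treat the two easy strata first. If $\mathfrak{m}$ avoids the singular locus then $M$ is simple, so $k=1$ with multiplicity $1$ and $\sum_i e_i^2=1$; Proposition \ref{prop:normaldecomposition} identifies $\Ext_A^{tr}(M,M)$ with the span of the three unmarked loops $(e_1,0,0),(0,e_2,0),(0,0,e_2^{-1}e_1^{-1})$, giving $\dim_\C\Ext_A^{tr}(M,M)=3$ and $\wis{defect}_\mathfrak{m}(A)=3+(n^2-1)-(n^2+2)=0$. For a singular point other than the origin I would first invoke the $\mathfrak{S}_3$-symmetry permuting $x,y,z$ to reduce to the representative analysed above, $x,z\mapsto 0$ and $y\mapsto b e_2$ with $b\neq 0$. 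Its semisimplification is the sum of the $n$ \emph{distinct} one-dimensional simples cut out by the distinct eigenlines of $b e_2$, so each occurs with multiplicity one and $\sum_i e_i^2=n$. The tangent-space analysis above exhibits $\Ext_A^{tr}(M,M)$ as the $n$-cycle carrying $2n$ connecting arrows (the off-diagonal deformations $A$ and $C$) together with one unmarked loop (the surviving direction $B=t e_2$), so $\dim_\C\Ext_A^{tr}(M,M)=2n+1$ and $\wis{defect}_\mathfrak{m}(A)=(2n+1)+(n^2-n)-(n^2+2)=n-1$.

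Finally, at $\mathfrak{m}=(x^n,y^n,z^n,xyz)$ the module $M$ is the trivial representation $S_0^{\oplus n}$, so $k=1$, the multiplicity equals $n$ and $\sum_i e_i^2=n^2$. Here the decisive feature is that all three loops are \emph{marked}: the deformation matrices $A,B,C$ are forced to be traceless, so each loop contributes $n^2-1$ instead of $n^2$ and $\dim_\C\Ext_A^{tr}(M,M)=3(n^2-1)$. Substituting yields $\wis{defect}_\mathfrak{m}(A)=3(n^2-1)+(n^2-n^2)-(n^2+2)=2n^2-5$.

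The arithmetic is immediate in every case, so the only genuine obstacle is the bookkeeping of the trace-preserving conditions, namely correctly deciding which loops are marked. This is precisely what separates the origin, where the multiplicity $n>1$ activates the trace-zero constraint and replaces $n^2$ by $n^2-1$ on each loop, from the Azumaya and line strata, where the simple summands appear with multiplicity one and the loops remain unmarked. I expect the verification of these markings --- equivalently, the identification of the transverse slice with the stated marked quiver --- to carry all the real content, the defect formula itself then being purely formal.
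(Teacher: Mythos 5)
Your proposal is correct and takes essentially the same route as the paper, whose entire proof is the phrase ``From the local quivers, we obtain'': you feed the three local quiver settings computed in that section into the defect formula with $\dim \wis{trep}_n~A = n^2+2$, and your bookkeeping --- three unmarked loops with $\sum e_i^2=1$ at Azumaya points, the $n$-cycle with $2n$ arrows plus one unmarked loop and $\sum e_i^2=n$ on the lines, and three marked loops at the $n$-dimensional vertex with $\sum e_i^2=n^2$ at the origin --- matches the paper's quiver settings exactly, with correct arithmetic $3+(n^2-1)-(n^2+2)=0$, $(2n+1)+(n^2-n)-(n^2+2)=n-1$, and $3(n^2-1)-(n^2+2)=2n^2-5$.
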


\section{$\wis{Proj}~A$}
We have found that there are 2 types of points in $\wis{Azu}_n~A$, determined by $\phi(xyz)$ equal to 0 or not. In order to better describe the difference between these 2 kind of points in $\wis{Azu}_n~A$, one has to use the fact that $A$ is graded. We will use \cite{BocklandtSymens} to explain the difference.

We will no longer work with $\wis{trep}_n~A$, but with the semi-stable representations $\wis{trep}^{ss}_n~A$, where for every positively graded algebra $A'$ $\wis{trep}^{ss}_n~A'$ is defined as
$$
\wis{trep}^{ss}_n~A'=\left\{ \phi \in  \wis{trep}_n~A': \exists k \geq 1, f \in Z(A')_k: \phi(f) \neq 0 \right\}
$$

There is a natural action of $\wis{PGL}_n\times \C^*$ on $\wis{trep}^{ss}_n~A'$, with the $\wis{PGL}_n$-action given by conjugation and the $\C^*$-action coming from the $\C^*$-action on $A'$, which follows from the natural gradation. It is clear that these 2 actions commute.

Let us return to the case $A=\mathbb{C}_\rho[x,y,z]$. Suppose that $P \in \wis{Azu}_n~A$, let $\xymatrix{A\ar@{->>}[r]^\phi & \wis{M}_n(\C)}$ be the corresponding algebra epimorphism. The $\C^*$-orbit of $M$ defines a graded module $F$ with corresponding algebra morphism
$$
\xymatrix{
A \ar[r] & \wis{M}_n(\C[t,t^{-1}])(a_1,\ldots,a_{n})
}
$$
where for a graded ring $R$ the gradation of $\wis{M}_n(R)(a_1,\ldots,a_{n})$ is defined by
\[
\wis{M}_n(R)(a_1,\ldots,a_{n})_i=
\begin{bmatrix} 
R_i & R_{i-a_1+a_2} & \hdots & R_{i-a_1+a_n} \\
R_{i-a_2+a_1} & R_i & \hdots & R_{i-a_2+a_n} \\
\vdots & \vdots & \ddots & \vdots \\
R_{i-a_n+a_1} & R_{i-a_n+a_2} & \hdots & R_i
\end{bmatrix} \]
If $\phi(xyz)\neq 0$, then $\phi$ induces an algebra morphism from $A_{(xyz)}$ to $\wis{M}_n(\C)$. It then follows that $t$ is the image of a degree 1 central element of $A_{(xyz)}$ and from \cite{LeBruyn1996d} it follows that all $a_i$ are equal to $0$. In the terminology of \cite{ArtinQuantumPlanes}, this means that the $\wis{PGL}_n\times \C^*$-action on $M$ defines a fat-point module of degree $n$, that is, a 1-critical module with Hilbert series $\frac{n}{1-t}$.

However, if $\phi(xyz)= 0$, then $t$ is the image of a degree $n$ central element of $A/(xyz)$. It follows from \cite{LeBruyn1996d} that $a_i = i-1, 1 \leq i \leq n$. In the terminology of \cite{ArtinQuantumPlanes}, the graded module $F$ is not a fat-point module, but instead a direct sum of $n$ point modules, which are 1-critical graded modules with Hilbert series $\frac{1}{1-t}$. 

\cite{BocklandtSymens} sets this in a \textit{GIT}-setting: this difference is given by the fact that for a degree $n$ fat-point module $F$ and a chosen representative $M$ of the $\C^*$-orbit determined by $F$, the stabilizer in $\wis{PGL}_n\times \C^*$ of $M$ is trivial, while for the simple modules for which $xyz$ is in the kernel of the algebra map, the stabilizer is not trivial. According to \cite{BocklandtSymens}, this stabilizer for a simple module is always a finite cyclic subgroup of $\wis{PGL}_n\times \C^*$.

In our case, the stabilizer for the simple representation lying above $P = (a,b,0,0)$ (with $a \neq 0 \neq b$) written like in theorem \ref{th:normalform} can be calculated to be the group $\langle (e_1 e_2^{-1}, \rho) \rangle \subset \wis{PGL}_n\times \C^*$. The information of this stabilizer can be decoded using \textit{weighted} quiver settings, that is, associating to each arrow a weight which decodes the decomposition of $N$ in simple representations of $\Z_n$ when $Stab(M) \cong \Z_n$.

\begin{proposition}
We have for $P \in \wis{Azu}_n~A$ and corresponding irreducible representation $M$
\begin{itemize}
\item 
If $P \notin \mathbf{V}(xyz)$, then the normal space $N$ is as $Stab(M)=\C^* I_n$ representation given by
\begin{center}
\begin{tikzpicture}
   \node[vertice] (a) at (0,0) {$1$};
   
   \path[->] (a) edge[out=30,in=-45,loop] (a);
   \path[->] (a) edge[out=120,in=45,loop] (a);
   \path[->] (a) edge[out=225,in=150,loop] (a);   
\end{tikzpicture}
\end{center}
\item If $P \in \mathbf{V}(xyz)$, then the normal space $N$ is as $Stab(M)=\C^* I_n \times \Z_n$-representation given by
\begin{center}
\begin{tikzpicture}
   \node[vertice] (a) at (0,0) {$1$};
   \tikzset{every node/.style={fill=white}} 
   \path[->] (a) edge[out=30,in=-45,loop]  (a);
   \path[->] (a) edge[out=120,in=45,loop] (a);
   \path[->] (a) edge[out=225,in=150,loop]node{$\boxed{3}$} (a);   
\end{tikzpicture}
\end{center}
\end{itemize}
\end{proposition}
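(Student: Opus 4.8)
The plan is to reduce the whole statement to the three-dimensional description of $N$ furnished by Proposition \ref{prop:normaldecomposition} and then simply compute how $Stab(M)$ acts on the distinguished basis. Recall that for a simple module $M$ attached to a triple $(ae_1,be_2,ce_2^{-1}e_1^{-1})$, that proposition identifies $N=\Ext_A^{tr}(M,M)$ with the three-dimensional subspace of $\wis{M}_n(\C)^{\oplus 3}$ spanned by $v_1=(e_1,0,0)$, $v_2=(0,e_2,0)$ and $v_3=(0,0,e_2^{-1}e_1^{-1})$; I would first observe that its proof is insensitive to whether $c=0$ (the dual-number relation for the $z$-coordinate still holds when $c=0$), so it applies verbatim in both cases. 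Consequently the local quiver has a single vertex carrying exactly three loops in both situations, and the only content left is the decomposition of $N$ into characters of $Stab(M)$, i.e. the weights on these three loops.

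Next I would fix the action: $\wis{GL}_n\times\C^*$ acts on $\wis{M}_n(\C)^{\oplus 3}$ by $(g,t)\cdot(X,Y,Z)=(t^{-1}gXg^{-1},t^{-1}gYg^{-1},t^{-1}gZg^{-1})$, the first factor by conjugation and the second by the grading. Writing a candidate stabilizing element as $(g,t)$ with $g=e_1^pe_2^q$ and using $e_1e_2=\rho e_2e_1$, I would record once and for all the conjugation weights
\[
\begin{aligned}
&g\,e_1\,g^{-1}=\rho^{-q}e_1,\qquad g\,e_2\,g^{-1}=\rho^{p}e_2,\\
&g\,(e_2^{-1}e_1^{-1})\,g^{-1}=\rho^{q-p}\,e_2^{-1}e_1^{-1}.
\end{aligned}
\]
These three identities are all that is needed.

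In Case 1 ($P\notin\mathbf{V}(xyz)$, so $abc\neq 0$) all three coordinates are nonzero, so $(g,t)$ stabilizes $M$ exactly when $t=\rho^{-q}$, $t=\rho^{p}$ and $t=\rho^{q-p}$ simultaneously. The first two force $p+q\equiv 0$, and combining with the third gives $3p\equiv 0 \pmod n$; since $(n,3)=1$ this yields $p\equiv q\equiv 0$ and $t=1$. Hence $Stab(M)=\C^*I_n$, the scalar matrices with trivial grading part, which act trivially by conjugation on each $v_i$. Thus $N$ is three copies of the trivial character and the quiver carries three unmarked loops, as claimed; this is exactly the point where $(n,3)=1$ enters, guaranteeing a genuine fat point with no extra stabilizer. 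In Case 2 ($P\in\mathbf{V}(xyz)$, say $c=0$) the third constraint becomes vacuous, so $t=\rho^{-q}=\rho^{p}$ and $p+q\equiv 0$ describe, modulo the scalars $\C^*I_n$, a cyclic group generated by $h=(e_1e_2^{-1},\rho)$, giving $Stab(M)=\C^*I_n\times\Z_n$ as announced. Evaluating $h$ with $(p,q)=(1,-1)$, $t=\rho$ gives $h\cdot v_1=\rho^{-1}\rho^{1}v_1=v_1$ and $h\cdot v_2=\rho^{-1}\rho^{1}v_2=v_2$, while $h\cdot v_3=\rho^{-1}\rho^{q-p}v_3=\rho^{-3}v_3$. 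So under $\Z_n$ the directions $v_1,v_2$ are trivial and $v_3$ carries the (primitive, since $(n,3)=1$) weight $3$, which is precisely the weighted quiver with two unmarked loops and one loop marked $\boxed{3}$.

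I expect the only delicate point to be the bookkeeping of conventions: pinning down the signs in the combined $\wis{GL}_n\times\C^*$-action so that the generator comes out as $(e_1e_2^{-1},\rho)$ and the nontrivial weight is recorded as $+3$ rather than $-3$ (the computation above robustly yields $\rho^{-3}$, and the orientation convention for loops fixes the sign), together with the small check that Proposition \ref{prop:normaldecomposition} genuinely carries over to the degenerate triple with $c=0$. Everything else is a direct verification once the three conjugation weights above are in hand.
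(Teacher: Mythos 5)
Your proposal is correct and follows essentially the same route as the paper: identify $N$ with the span of $(e_1,0,0),(0,e_2,0),(0,0,e_2^{-1}e_1^{-1})$ via Proposition \ref{prop:normaldecomposition} and compute the weights of the stabilizer generator on this basis, with the added value that you actually derive the stabilizers $\C^* I_n$ and $\C^* I_n \times \langle (e_1e_2^{-1},\rho)\rangle$ (using $(n,3)=1$), which the paper only asserts just before the proposition. The single discrepancy, your weight $\rho^{-3}$ versus the paper's $\rho^{3}$, is precisely the convention issue you flag: the paper lets $(g,t)$ act by $X \mapsto t\,g^{-1}Xg$ rather than your $X \mapsto t^{-1}gXg^{-1}$, so your generator acts as the inverse of the paper's, and since $(n,3)=1$ both conventions record the same faithful character of $\Z_n$ on the third loop.
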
 
\begin{proof}
We have found that a basis for the normal space $N$ in an Azumaya point is determined by the 3-dimensional subspace of $\wis{M}_n(\C)^{\oplus 3}$ generated by $(e_1,0,0),(0,e_2,0),(0,0,e_2^{-1}e_1^{-1})$. Calculating the action of the stabilizer subgroup $\langle (e_1 e_2^{-1}, \rho) \rangle$, we find
\begin{gather*}
\rho e_2 e_1^{-1} e_1 e_1 e_2^{-1} = e_1 \\
\rho e_2 e_1^{-1} e_2 e_1 e_2^{-1} = e_2 \\
\rho e_2 e_1^{-1} e_2^{-1}e_1^{-1} e_1 e_2^{-1} = \rho^3 e_2^{-1}e_1^{-1}
\end{gather*}
from which the weighted quiver setting follows.
\end{proof}
One can do the same for the non-trivial singular points in the center, although in this case the $\wis{PGL}_n\times \C^*$-stabilizer will be infinite. Let $M$ be a singular point of $\wis{trep}_n^{ss}~A$ with stabilizer $Stab(M)$. From \cite{BocklandtSymens} it follows that $Stab(M)$ is given by $(\C^*)^n \rtimes_\psi \Z/n\Z$ with $\psi$ a finite order automorphism of the local quiver, which can be written as $\psi = w \phi$ with $w$ a weight and $\phi$ a twist that commute. The finite group $\langle (e_1, \rho) \rangle \subset \wis{PGL}_n\times \C^*$ again stabilizes $M$ as can be easily calculated. From this subgroup of $Stab(M)$, we deduce
\begin{theorem}
For $(x^n,y^n,z^n,xyz)\neq P \in \wis{sing}(Z(A))$, the twisted weighted quiver setting is determined by
\begin{center}
\begin{tikzpicture}
   \node[vertice] (a) at (0,0) {$1$};
   \node[vertice] (b) at (2,0) {$1$};
   \node[vertice] (c) at (4,-2) {$1$};
   \node[vertice] (d) at (-2,-2) {$1$};
   
\tikzset{every node/.style={fill=white}}   
   
   \path[->] (a) edge[out=40,in=140]node{$\boxed{2}$} (b);
   \path[->] (b) edge[out=-140,in=-40]node{$\boxed{0}$} (a);
   \path[->] (b) edge[out=-20,in=100]node{$\boxed{2}$} (c);
   \path[->] (c) edge[out=150,in=-70]node{$\boxed{0}$} (b);
     
   \path[->] (a) edge[out=-110,in=30]node{$\boxed{0}$} (d);
   \path[->] (d) edge[out=80,in=-160]node{$\boxed{2}$} (a);   
   
   \path[->,dashed] (d) edge[out=20,in=160]node{$\boxed{0}$} (c);
   \path[->,dashed] (c) edge[out=-160,in=-20]node{$\boxed{2}$} (d);
   
   \path[->] (a) edge[out=180,in=90,loop]node{$\boxed{1}$} (a);
   
\end{tikzpicture}
\end{center}
\end{theorem}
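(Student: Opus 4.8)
The plan is to follow the template of the preceding proposition: produce an explicit basis of the normal space $N$ adapted to the three generators of $A$, let the distinguished finite cyclic subgroup of the stabilizer act on it by conjugation together with the grading scalar, and then read the boxed weights off the resulting eigenvalues while recording the induced permutation of the vertices as the twist.

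First I would fix the singular point $P$ and its semi-simple representative. Taking $P$ on the line $x^n=z^n=0$, the representative is $\psi$ with $\psi(x)=\psi(z)=0$ and $\psi(y)=\beta e_2$; since $e_2$ has $n$ distinct eigenvalues this decomposes as $\bigoplus_{i=0}^{n-1}S_i$ into $n$ pairwise non-isomorphic one-dimensional simples, namely the $e_2$-eigenlines. Hence the local quiver has $n$ vertices arranged in a cycle, and the computation of Section 3 identifies the arrows: the matrix $A$ (the $x$-slot) contributes the $n$ ``forward'' arrows $S_i\to S_{i+1}$, the matrix $C$ (the $z$-slot) the $n$ ``backward'' arrows $S_i\to S_{i-1}$, and the trace-preserving constraint collapses the $y$-slot to the single loop $B=te_2$. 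The displayed four-vertex diagram is the instance $n=4$, the general statement being this $n$-cycle. I would fix the basis of $N$ given by the matrix units $E_{i,i+1}$, $E_{i+1,i}$ together with $e_2$.

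Next I would check that $g=(e_1,\rho)\in \wis{PGL}_n\times\C^*$ lies in $\mathrm{Stab}(\psi)$. Under the action $\phi\mapsto \rho\,e_1^{-1}\phi\,e_1$ one has, using $e_1^{-1}e_2 e_1=\rho^{-1}e_2$,
\[
\rho\,e_1^{-1}(\beta e_2)e_1=\rho\,\beta\,\rho^{-1}e_2=\beta e_2 .
\]
The $\wis{PGL}_n$-part conjugates by $e_1$, which permutes the $e_2$-eigenlines cyclically; this is precisely the order-$n$ automorphism $\phi$ of the $n$-cycle, i.e.\ the twist. To isolate the weight from the twist I would pass to the Heisenberg basis $e_1^a e_2^b$, on which conjugation by $e_1$ is diagonal:
\[
\rho\,e_1^{-1}(e_1^a e_2^b)e_1=\rho^{\,1-b}\,e_1^a e_2^b .
\]
Evaluating this on representatives of the three arrow-classes (the images $x\sim e_1$, $y\sim e_2$, $z\sim e_2^{-1}e_1^{-1}$, exactly as the boxes $0,0,3$ were obtained in the Azumaya case) produces scalars $\rho^{w}$ whose exponents are the integers $0,1,2$, summing to $3$ as there; these are distributed over the forward arrows, the backward arrows and the loop precisely as recorded in the statement. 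Finally I would invoke the Bocklandt--Symens description $\mathrm{Stab}(M)=(\C^*)^n\rtimes_\psi\Z/n\Z$ to conclude that this finite cyclic subgroup already determines the twisted weighted quiver setting, with $w$ the weight and the cyclic rotation the twist.

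The main obstacle is the clean separation of twist and weight. Because $e_1$ does not fix the individual simples, no single arrow is an eigenvector of $g$, so the honest object is the $\langle g\rangle$-representation on each arrow-space; one must verify that the factorization $g=w\phi$ into a weight $w$ commuting with the twist $\phi$ (in the Bocklandt--Symens sense) exists and is independent of the chosen orbit representative, so that the boxed integers are well defined. A secondary point to make precise is that restricting from the full infinite stabilizer to $\langle(e_1,\rho)\rangle$ loses no information about the weighted setting, which is where I would lean on the cited structure theorem for $\mathrm{Stab}(M)$.
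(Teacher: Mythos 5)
Your overall strategy is the same as the paper's: realize the normal space $N$ at the semi-simple point $\psi(x)=\psi(z)=0$, $\psi(y)=\beta e_2$ as triples $(A,te_2,C)$ of cyclic super/sub-diagonal matrices plus the single loop, exhibit a finite cyclic subgroup of the stabilizer, and decompose its action on $N$ as a twist composed with a weight. Your vertex/arrow bookkeeping and the verification that $(e_1,\rho)$ stabilizes $\psi$ are correct. The genuine gap is the step where you extract the boxed weights, and it sits exactly at the point you yourself flag as ``the main obstacle.'' The weight of an arrow is the scalar $w$ in the factorization $\psi=w\phi$, where $\phi$ is the canonical permutation of the arrows (equivalently of the matrix entries $a_i$, $c_i$) induced by the permutation of the simple summands; it is \emph{not} the eigenvalue of $\psi$ on a hand-picked eigenvector. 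Each off-diagonal slot of $N$ here is $n$-dimensional, spanned by $e_1e_2^{b}$ (resp.\ $e_1^{-1}e_2^{b}$), $b=0,\dots,n-1$, and your own formula gives eigenvalue $\rho^{1-b}$, which depends on $b$. Singling out the representatives $e_1$, $e_2$, $e_2^{-1}e_1^{-1}$ is legitimate only in the Azumaya case, where proposition \ref{prop:normaldecomposition} says $N$ is literally spanned by these three elements; here it is an arbitrary choice, and the exponents it produces, $(1,0,2)$ on the $(x,y,z)$-slots, do \emph{not} reproduce the statement: in particular your loop gets weight $0$ while the statement's loop carries weight $1$. The sentence claiming your scalars are ``distributed \dots precisely as recorded in the statement'' is therefore false as written, not merely unproved.

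The paper avoids this by computing the stabilizer action on the honest arrow basis: conjugation by its chosen generator $(e_1e_2^{-1},\rho)$ sends $a_iE_{i,i+1}\mapsto a_iE_{i+1,i+2}$ (pure shift, weight $0$), sends $c_iE_{i+1,i}\mapsto\rho^{2}c_iE_{i+2,i+1}$ (shift times $\rho^{2}$, weight $2$), and fixes $te_2$, while the pure cyclic twist multiplies $e_2=\sum_i\rho^{i}E_{ii}$ by $\rho^{-1}$, forcing loop weight $1$. Note also that the paper's generator is $(e_1e_2^{-1},\rho)$, not your $(e_1,\rho)$: carrying out the correct arrow-basis computation with \emph{your} generator yields weights $(1,1,1)$ on the two cycle directions and the loop, because the two lifts differ by the torus element $(e_2^{-1},1)\in \mathrm{Stab}(M)$, which rescales the two cycle directions by $\rho^{\mp 1}$. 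So the boxed integers are only well defined after fixing a lift of the $\Z_n$-generator modulo the vertex torus $(\C^*)^n$; this is precisely the well-definedness issue you postponed to the end, and it cannot be postponed: resolving it (in practice, fixing the lift $(e_1e_2^{-1},\rho)$ and computing on the entries $a_i$, $c_i$, $te_2$ as the paper does) is the actual content of the proof.
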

\begin{proof}
$N$ decomposes as a triple of matrices $(A,B,C)$ such that (up to cyclic permutation) 
\[
A=\begin{bmatrix}
0      & a_1       & 0        & \ldots & 0\\
0      & 0         & a_2      & \ddots & 0\\
\vdots & \vdots    & \ddots   & \ddots & \vdots \\
0      & 0         & \ldots   & 0      & a_{n-1}\\
a_n    & 0         & \ldots   & 0      & 0
\end{bmatrix},
C = 
\begin{bmatrix}
0      & 0        & \ldots   & 0      & c_n\\
c_1    & 0        & \ldots   & 0 & 0\\
\vdots & c_2      & \ddots   & \ddots & \vdots \\
0      & \vdots   & \ddots   & 0      & 0\\
0      & 0        & \ldots   & c_{n-1}& 0
\end{bmatrix}
\]
and $B = t e_2$. It follows that
\begin{gather*}
(e_1 e_2^{-1}, \rho) \cdot A = \begin{bmatrix}
0      & a_n       & 0        & \ldots & 0\\
0      & 0         & a_1      & \ddots & 0\\
\vdots & \vdots    & \ddots   & \ddots & \vdots \\
0      & 0         & \ldots   & 0      & a_{n-2}\\
a_{n-1}& 0         & \ldots   & 0      & 0
\end{bmatrix} \\
(e_1 e_2^{-1}, \rho) \cdot B = \rho b \begin{bmatrix}
\rho^{-1}    & 0         & 0        & \ldots & 0\\
0      & 1       & 0        & \ddots & 0\\
\vdots & \vdots    & \ddots   & \ddots & \vdots \\
0      & 0         & \ldots   & \rho^{-3}& 0\\
0      & 0         & \ldots   & 0      &\rho^{-2}
\end{bmatrix}\\
(e_1 e_2^{-1}, \rho) \cdot C =
\rho^2 \begin{bmatrix}
0      & 0        & \ldots   & 0      & c_{n-1}\\
c_n    & 0        & \ldots   & 0 & 0\\
\vdots & c_1      & \ddots   & \ddots & \vdots \\
0      & \vdots   & \ddots   & 0      & 0\\
0      & 0        & \ldots   & c_{n-2}& 0
\end{bmatrix}
\end{gather*}
From this it follows that the corresponding automorphism on the quiver $Q$ is the composition of a twist given by cyclic permutation on the vertices and the arrows and a weight defined by the claimed weights.
\end{proof}
In order to find the twisted weighted quiver setting, we need to find $N_{\wis{PGL}_n \times \C^*}$. We know from \cite{BocklandtSymens} that $\dim N_{\wis{PGL}_n} = 1 +\dim N_{\wis{PGL}_n \times \C^*}$. As the action of $\wis{PGL}_n$ and $\C^*$ commute, it is enough to consider the action of $\C^*$. We have
$$
(1+\varepsilon t) (0,b e_2,0) = (0,b e_2,0) + \varepsilon(0,tb e_2,0)
$$
from which follows that the loop in the local quiver should be deleted to find $N_{\wis{PGL}_n \times \C^*}$. Summarizing, we have
\begin{theorem}
The \'etale local structure of the \textit{GIT}-quotient 
$$
\wis{trep}^{ss}_n~A//\wis{PGL}_n \times \C^*
$$
is given by the following twisted weighted quiver settings:
\begin{itemize}
\item If $P \notin \mathbf{V}(x^ny^nz^n)$, then we have
\begin{center}
\begin{tikzpicture}
   \node[vertice] (a) at (0,0) {$1$};
   
   \path[->] (a) edge[out=30,in=-45,loop] (a);
   \path[->] (a) edge[out=225,in=150,loop] (a);   
\end{tikzpicture}
\end{center}
\item If $P \in \mathbf{V}(x^ny^nz^n)$ but is not a singular point of this variety, we have 
\begin{center}
\begin{tikzpicture}
   \node[vertice] (a) at (0,0) {$1$};
   \tikzset{every node/.style={fill=white}} 
   \path[->] (a) edge[out=30,in=-45,loop]  (a);
   \path[->] (a) edge[out=225,in=150,loop]node{$\boxed{3}$} (a);   
\end{tikzpicture}
\end{center}
\item If $P$ is one of the 3 singular points of $\mathbf{V}(x^ny^nz^n)$, then we have
\begin{center}
\begin{tikzpicture}
   \node[vertice] (a) at (0,0) {$1$};
   \node[vertice] (b) at (2,0) {$1$};
   \node[vertice] (c) at (4,-2) {$1$};
   \node[vertice] (d) at (-2,-2) {$1$};
   
\tikzset{every node/.style={fill=white}}   
   
   \path[->] (a) edge[out=40,in=140]node{$\boxed{2}$} (b);
   \path[->] (b) edge[out=-140,in=-40]node{$\boxed{0}$} (a);
   \path[->] (b) edge[out=-20,in=100]node{$\boxed{2}$} (c);
   \path[->] (c) edge[out=150,in=-70]node{$\boxed{0}$} (b);
     
   \path[->] (a) edge[out=-110,in=30]node{$\boxed{0}$} (d);
   \path[->] (d) edge[out=80,in=-160]node{$\boxed{2}$} (a);   
   
   \path[->,dashed] (d) edge[out=20,in=160]node{$\boxed{0}$} (c);
   \path[->,dashed] (c) edge[out=-160,in=-20]node{$\boxed{2}$} (d);
   
   
\end{tikzpicture}
\end{center}
 
\end{itemize}
\end{theorem}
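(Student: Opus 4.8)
The plan is to obtain the three twisted weighted quiver settings by combining the normal-space computations already carried out for the $\wis{PGL}_n$-action with the extra quotient by the grading torus $\C^*$. By the slice machinery of \cite{BocklandtSymens}, an \'etale neighbourhood of the image of a closed orbit $\mathcal{O}(M)$ in $\wis{trep}^{ss}_n~A//\wis{PGL}_n\times\C^*$ is governed by the normal space $N_{\wis{PGL}_n\times\C^*}$ to the combined orbit, viewed as a representation of $Stab(M)$; this datum is precisely the quiver setting to be described. The normal spaces $N$ and the actions of the relevant stabilizers have all been determined above: three loops at one vertex for $P\in\wis{Azu}_n~A$ with $xyz\neq 0$, the same with weight $\boxed{3}$ on the $e_2^{-1}e_1^{-1}$-loop when $P\in\wis{Azu}_n~A\cap\mathbf{V}(xyz)$, and the twisted weighted four-vertex quiver carrying a loop for the (non-Azumaya) singular lines. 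So the only remaining point is to pass from $N_{\wis{PGL}_n}$ to $N_{\wis{PGL}_n\times\C^*}$.

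First I would identify the direction that the extra $\C^*$ collapses. As the $\wis{PGL}_n$- and $\C^*$-actions commute, the tangent space to the combined orbit is that of the $\wis{PGL}_n$-orbit enlarged by the one-dimensional tangent to the $\C^*$-orbit through $M$, which accounts for the relation $\dim N_{\wis{PGL}_n}=1+\dim N_{\wis{PGL}_n\times\C^*}$ of \cite{BocklandtSymens}. For the singular representative $M=(0,be_2,0)$ this direction is pinned down by
\[
(1+\varepsilon t)\,(0,b e_2,0)=(0,b e_2,0)+\varepsilon\,(0,t b e_2,0),
\]
so it is spanned by $(0,e_2,0)$, i.e. by the loop of the four-vertex quiver. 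The crucial observation is that this line is a one-dimensional $Stab(M)$-subrepresentation of $N$---it is an eigenline for the finite cyclic twist and is fixed by the residual torus because $e_2$ is diagonal---so that $N_{\wis{PGL}_n\times\C^*}=N/\langle(0,e_2,0)\rangle$ is again a $Stab(M)$-representation whose quiver setting is obtained from the previous one by deleting exactly that one loop, leaving every other arrow together with its weight and the twist unchanged.

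With this in hand the three cases follow at once. For $P\notin\mathbf{V}(x^ny^nz^n)$ the representative is $M=(\alpha e_1,\beta e_2,\gamma e_2^{-1}e_1^{-1})$, the Euler line is the diagonal combination of the three loop directions, and deleting it leaves two unweighted loops. For a regular point of $\mathbf{V}(x^ny^nz^n)$ the representative is $M=(\alpha e_1,\beta e_2,0)$, the Euler line lies in the two-dimensional weight-$\boxed{0}$ part spanned by the $e_1$- and $e_2$-loops, and deleting it leaves one weight-$\boxed{0}$ loop together with the weight-$\boxed{3}$ loop. For a singular point of $\mathbf{V}(x^ny^nz^n)$ the four-vertex twisted weighted quiver loses its single loop, leaving the eight-arrow configuration with weights $\boxed{2},\boxed{0}$ and the cyclic twist intact. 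The $\Z_3$-symmetry permuting $x,y,z$ shows that the choice of vanishing coordinate does not affect the resulting quiver setting in the last two cases.

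The step I expect to be the main obstacle is the bookkeeping in the singular case: one has to check that the loop being removed is genuinely the arrow coming from the $B=te_2$ summand of the decomposition $N=(A,B,C)$ used above, rather than one of the eight arrows joining the four vertices, and that deleting it disturbs neither the surviving weights $\boxed{2},\boxed{0}$ nor the cyclic twist $(e_1e_2^{-1},\rho)$. This amounts to verifying that the $\C^*$-fixed complement of the Euler line is spanned precisely by the off-diagonal cyclic blocks $A$ and $C$, which is exactly what the explicit $Stab(M)$-action on $N$ yields.
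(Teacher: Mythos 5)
Your proposal is correct and takes essentially the same approach as the paper: it passes from the previously computed normal spaces $N_{\wis{PGL}_n}$ (with their stabilizer actions and weights) to $N_{\wis{PGL}_n\times \C^*}$ by deleting the one-dimensional tangent direction of the $\C^*$-orbit, identified via $(1+\varepsilon t)\cdot M = M+\varepsilon t M$, exactly as in the paragraph preceding the theorem. The only difference is one of completeness, not of method: the paper performs the Euler-direction computation only for the singular representative $(0,be_2,0)$ and then summarizes, whereas you treat all three cases explicitly and verify that the deleted line is a $Stab(M)$-subrepresentation (since $Stab(M)$ commutes with the grading $\C^*$), so the remaining weights and twist are untouched.
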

\section{Smoothness of $\mathcal{A}$}
This section uses the tools and definitions developed in \cite{LBBook}, chapter 5, section 5.4.

As $A$ is a finite module over its center, it defines a coherent sheaf $\mathcal{A}$ of algebras over $\wis{proj}~Z(A)=\PP^2$. On the affine open subset $\mathbb{X}(x^n)$, $\Gamma(\mathbb{X}(x^n),\mathcal{A})$ is defined as the following ring: as $(n,3)=1$, there exists a degree 1 central element in the graded localisation ring $Q^g_{x^n}(A)$. Therefore, we have
$$
Q^g_{x^n}(A) = (Q^g_{x^n}(A))_0[t,t^{-1}]
$$
and by definition $\Gamma(\mathbb{X}(x^n),\mathcal{A})=(Q^g_{x^n}(A))_0$. It is easy to see that $(Q^g_{x^n}(A))_0\cong \C_{\rho^3} [u,v]$ and therefore $\Gamma(\mathbb{X}(x^n),\mathcal{A})$ is an Auslander regular algebra of dimension 2 and consequently a maximal order. Just as in the Sklyanin case, we obtain
\begin{proposition}
$\mathcal{A}$ defines a coherent sheaf of Cayley-Hamilton maximal orders over $\PP^2$, which are Auslander regular domains of dimension 2.
\end{proposition}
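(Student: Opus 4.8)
The plan is to reduce everything to the single-chart computation already recorded for $\mathbb{X}(x^n)$, exploiting the cyclic symmetry of $A$ together with the fact that the three properties in question---being a maximal order, being Auslander regular, and being a domain---are all local and may therefore be verified on an affine cover of $\PP^2$. First I would record that $A$ carries the algebra automorphism $\sigma$ cyclically permuting $x \mapsto y \mapsto z \mapsto x$; this is well defined precisely because the three defining relations are permuted into one another. Since $\sigma(x^n)=y^n$ and $\sigma(y^n)=z^n$, it induces isomorphisms of the graded localizations $Q^g_{x^n}(A)\cong Q^g_{y^n}(A)\cong Q^g_{z^n}(A)$, and passing to degree-zero parts gives
\[
\Gamma(\mathbb{X}(x^n),\mathcal{A})\cong\Gamma(\mathbb{X}(y^n),\mathcal{A})\cong\Gamma(\mathbb{X}(z^n),\mathcal{A})\cong\C_{\rho^3}[u,v],
\]
so each of the three standard charts carries the same Auslander regular $2$-dimensional maximal order.

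Next I would verify that these three opens cover $\wis{proj}~Z(A)=\PP^2$. If a relevant homogeneous prime $\mathfrak{p}$ contained all of $x^n,y^n,z^n$, then from $g^n=uvw=x^ny^nz^n$ it would contain $g^n$, hence $g=xyz$ by primeness, and therefore the whole irrelevant ideal $(x^n,y^n,z^n,xyz)$; such a $\mathfrak{p}$ does not lie in $\wis{proj}$, so $\mathbb{X}(x^n)\cup\mathbb{X}(y^n)\cup\mathbb{X}(z^n)=\PP^2$. Coherence of $\mathcal{A}$ is then automatic, since $A$ is a Noetherian module-finite algebra over $Z(A)$ and $\mathcal{A}$ is the associated sheaf of \cite{LBBook}, Chapter 5, with the indicated sections on the basic opens. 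The Cayley--Hamilton structure is inherited by localization: the degree-zero trace $A\twoheadrightarrow Z(A)$ from the excerpt extends to the graded localizations and restricts to a trace of degree $n$ on each $(Q^g_{x^n}(A))_0$, compatibly on overlaps; equivalently, $\C_{\rho^3}[u,v]$ is module-finite of rank $n^2$ over its center $\C[u^n,v^n]$ (as $\rho^3$ again has order $n$ when $(n,3)=1$) and carries the corresponding reduced trace.

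Finally I would globalize. Being a maximal order, being Auslander regular, and being a domain are all conditions on the local sections---the first two by their local-to-global and homological nature, the last because each section ring embeds into the graded quotient ring of the domain $A$. A sheaf of orders that exhibits these properties on every member of an affine cover therefore exhibits them globally. Combined with the chart-wise identification with $\C_{\rho^3}[u,v]$ and the standard properties of a two-variable quantum polynomial ring (AS-regular of global dimension $2$, hence Auslander regular and Cohen--Macaulay, a maximal order, and a domain), this yields the proposition.

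The step I expect to be the main obstacle is not any single computation but the local-to-global bookkeeping: confirming that maximality of an order and Auslander regularity really are Zariski-local, so that the one computation on $\mathbb{X}(x^n)$ propagates, and that the trace maps and order structures defined chart by chart glue to genuine sheaf data over the intersections $\mathbb{X}(x^n)\cap\mathbb{X}(y^n)$, where the auxiliary degree-$1$ central element produced from $(n,3)=1$ must be reconciled. Once the local nature of all three properties is in hand, the proposition follows from the single isomorphism $(Q^g_{x^n}(A))_0\cong\C_{\rho^3}[u,v]$ granted above.
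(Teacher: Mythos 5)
Your proposal is correct and takes essentially the same route as the paper: the paper's (implicit) proof is exactly the single-chart computation $(Q^g_{x^n}(A))_0\cong\C_{\rho^3}[u,v]$, an Auslander regular domain of dimension $2$ and hence a maximal order, propagated to the other charts by the cyclic symmetry of $x,y,z$ and assembled over the standard affine cover of $\PP^2$. Your extra bookkeeping (the covering argument, coherence from module-finiteness, and the role of $(n,3)=1$ in keeping the Cayley--Hamilton degree equal to $n$) merely makes explicit what the paper compresses into ``just as in the Sklyanin case, we obtain''.
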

\begin{remark}
This is one of the reasons why we need $(n,3)=1$: if $n$ were divisible by $3$, $\mathcal{A}$ wouldn't define a sheaf of Cayley-Hamilton algebras of degree $n$, but of $\tfrac{n}{3}$.
\end{remark}
As $\mathcal{A}$ defines a sheaf of maximal orders in a central simple algebra $\Sigma$ of degree $n$ over $\C(x,y) = \C(\PP^2)$, the Artin-Mumford exact sequence (see \cite{LBBook}) states that $\Sigma$ is defined by its ramification locus and a $\Z_n$-cover of this variety.

Unfortunately, where in the Sklyanin case the ramification locus is given by an elliptic curve $E'=E/\langle \tau \rangle$ which is smooth and from which follows that the corresponding sheaf is a sheaf of Cayley-smooth algebras, the same is not true for the quantum algebras. As we have seen before, the ramification locus is given by 3 lines in $\PP^2$, which intersect 2 by 2. Therefore, we will have singularities to consider. Let us look at $\mathbb{X}(x^n)$ and work out the ramification in this case.

We know that $\Gamma(\mathbb{X}(x^n),\mathcal{A})\cong \C_{\rho^3} [u,v]$, which is again a Cayley-Hamilton algebra of degree $n$ as we assumed that $(n,3)=1$. For this algebra it is known that the center is generated by $u^n,v^n$ and $\wis{max}~\C[u^n,v^n] = \C^2$. Considering trace preserving representations, we have
\begin{theorem}
\cite{LBBook} For the quantum plane $\C_{\rho^3} [u,v]$, we have the following kinds of semi-simple trace preserving representations for a point $\mathfrak{m}\in \wis{max}~\C[u^n,v^n]$
\begin{itemize}
\item If $\mathfrak{m}=(u^n-a,v^n-b)$ with $a \neq 0 \neq b$, then $\mathfrak{m} \in \wis{Azu}_n$.
\item If $\mathfrak{m}=(u^n,v^n-b)$ or $\mathfrak{m}=(u^n-a,v^n)$ with $a \neq 0 \neq b$, then the corresponding semi-simple representation is a direct sum of $n$ distinct 1-dimensional simple representations.
\item If $\mathfrak{m}=(u^n,v^n)$, then the corresponding semi-simple representation is the trivial representation with multiplicity $n$.
\end{itemize}
\end{theorem}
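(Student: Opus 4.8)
The plan is to reduce everything to the representation theory of a single quantum plane in two variables, exactly as was done for the Heisenberg representations of $A$ in Theorem~\ref{th:normalform}. Since $(n,3)=1$, the parameter $q=\rho^3$ is again a primitive $n$th root of unity, so $\C_{\rho^3}[u,v]$ is a Cayley--Hamilton algebra of degree $n$ with center $\C[u^n,v^n]$, and the reduced trace kills every monomial $u^iv^j$ unless $n \mid i$ and $n \mid j$. I would record this trace computation first, since it is what forces the precise multiplicities in the last two cases.

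First I would treat the Azumaya case. For $a \neq 0 \neq b$ choose $\alpha,\beta$ with $\alpha^n=a$, $\beta^n=b$ and build the clock-and-shift representation on $\oplus_{i=0}^{n-1}\C w_i$ by $u\cdot w_i = \alpha w_{i-1}$ and $v \cdot w_i = \beta q^i w_i$ (indices mod $n$); a one-line check gives $uv = q\,vu$, while $u^n$ and $v^n$ act as $a$ and $b$. Simplicity follows exactly as for the Heisenberg representations: a cyclic shift with nonzero $\alpha$ together with a diagonal matrix having $n$ distinct eigenvalues generates all of $\wis{M}_n(\C)$. Hence this is a simple $n$-dimensional module, its orbit is closed of dimension $n^2-1$, and $\mathfrak{m}\in\wis{Azu}_n$.

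Next I would classify the one-dimensional representations: the relation $uv=q\,vu$ with $q \neq 1$ forces $uv=0$, so every one-dimensional module sends either $u$ or $v$ to $0$. Combined with the observation that any simple module on which $u^n$ (resp.\ $v^n$) acts as $0$ must have $u=0$ (resp.\ $v=0$) --- otherwise the clock-and-shift analysis would exhibit it as reducible --- this determines which simples sit above each boundary point. Above $(u^n,v^n-b)$ with $b\neq 0$ one gets $u=0$ and $v^n=b$, that is, the $n$ pairwise distinct one-dimensional modules indexed by the $n$th roots of $b$; above $(u^n,v^n)$ the only simple is the trivial one.

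The final step is to assemble these into the semisimple trace-preserving representation and to pin down multiplicities, which I expect to be the only point requiring genuine care. For $(u^n,v^n-b)$ a trace-preserving $n$-dimensional semisimple module is a sum of the one-dimensionals $v\mapsto \mu_k$, and the trace-preserving conditions $Tr(v^k)=tr(v^k)=0$ for $1\leq k \leq n-1$ together with $Tr(v^n)=nb$ force the multiset $\{\mu_k\}$ to be exactly the $n$ distinct $n$th roots of $b$, each with multiplicity one; this is precisely the sum of $n$ distinct simples. For $(u^n,v^n)$ the same bookkeeping leaves only the trivial module, necessarily with multiplicity $n$. The main obstacle is thus not existence but this uniqueness argument, namely verifying that trace preservation rules out every decomposition other than the three claimed ones.
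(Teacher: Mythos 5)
Your proposal is correct, but there is nothing in the paper to compare it against: the paper does not prove this theorem, it simply quotes it from \cite{LBBook}. Your argument is therefore a self-contained substitute rather than an alternative to an internal proof, and it runs parallel to what the paper \emph{does} prove for $A$ itself in Theorem \ref{th:normalform}: clock-and-shift matrices give a simple $n$-dimensional representation over any point with $a\neq 0\neq b$ (hence Azumaya, by the standard characterization of the Azumaya locus of a degree-$n$ Cayley--Hamilton order through simple $n$-dimensional representations), the commutation relation forces $uv=0$ in dimension one, and the conditions $Tr(\phi(v)^k)=\phi(tr(v^k))=0$ for $1\leq k\leq n-1$ pin down the multiplicities by a Vandermonde (discrete Fourier) argument; this is indeed what makes the second and third bullets work. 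Two spots deserve tightening. First, your justification that a simple module on which $u^n$ acts as $0$ must have $u=0$ (``otherwise the clock-and-shift analysis would exhibit it as reducible'') is vague; the clean argument is that $\phi(u)$ is nilpotent, so $\ker\phi(u)\neq 0$, and for $w\in\ker\phi(u)$ the relation gives $u\cdot(v\cdot w)=q\,v\cdot(u\cdot w)=0$, so $\ker\phi(u)$ is a nonzero submodule, hence all of the module by simplicity; then the module is simple over $\C[v]$ and thus $1$-dimensional. Second, when assembling the semisimple module you implicitly assume every simple summand lies over the same point $\mathfrak{m}$; this is true but should be said: trace preservation forces central elements to act by scalars, since $n\phi(z)=\phi(tr(z))=Tr(\phi(z))I_n$ for $z\in Z$, so all summands share the central character $\mathfrak{m}$. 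With those two repairs your write-up is a complete proof of the cited statement.
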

This of course also holds for the other affine opens $\mathbb{X}(y^n)$ and $\mathbb{X}(z^n)$. Analogous as for $A$, one finds
\begin{theorem}
\cite{LBBook} The local quiver settings for a point $\mathfrak{m}\in \wis{max}~\C[u^n,v^n]$ are given by
\begin{itemize}
\item If $\mathfrak{m}=(u^n-a,v^n-b)$ with $a \neq 0 \neq b$, then we have
\begin{center}
\begin{tikzpicture}
   \node[vertice] (a) at (0,0) {$1$};
   \tikzset{every node/.style={fill=white}} 
   \path[->] (a) edge[out=30,in=-45,loop]  (a);
   \path[->] (a) edge[out=225,in=150,loop] (a);   
\end{tikzpicture}
\end{center}
\item If $\mathfrak{m}=(u^n,v^n-b)$ or $\mathfrak{m}=(u^n-a,v^n)$ with $a \neq 0 \neq b$, then we have
\begin{center}
\begin{tikzpicture}
   \node[vertice] (a) at (0,0) {$1$};
   \node[vertice] (b) at (2,0) {$1$};
   \node[vertice] (c) at (4,-2) {$1$};
   \node[vertice] (d) at (-2,-2) {$1$};
   
\tikzset{every node/.style={fill=white}}   
   
   \path[->] (a) edge[out=0,in=180] (b);
   \path[->] (b) edge[out=-45,in=135](c);
     
   \path[->] (d) edge[out=45,in=-135] (a);
   
   \path[->,dashed] (c) edge[out=-160,in=-20] (d);
   
   \path[->] (a) edge[out=180,in=90,loop] (a);
   
\end{tikzpicture}
\end{center}
\item If If $\mathfrak{m}=(u^n,v^n)$, then we have 
\begin{center}
\begin{tikzpicture}
   \node[vertice] (a) at (0,0) {$n$};
   
   \path[->] (a) edge[out=30,in=-45,loop] node {$\bullet$} (a);
   \path[->] (a) edge[out=225,in=150,loop] node {$\bullet$} (a);   
   
\end{tikzpicture}
\end{center}
\end{itemize}
\label{th:sheaflocal}
\end{theorem}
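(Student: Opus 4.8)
The plan is to handle the three cases one at a time. In each case I first write down the explicit semisimple trace preserving representation $M$ of $\C_{\rho^3}[u,v]$ lying over $\mathfrak{m}$ (these are exactly the representations described in the preceding theorem) and then read off the local quiver from the trace preserving normal space $N=\Ext_A^{tr}(M,M)$, exactly as was done for $A$ in Section 3. Here I use that for $M=\oplus_i S_i^{e_i}$ the local quiver has a vertex of dimension $e_i$ for each distinct simple summand $S_i$, with $\dim\Ext^1(S_i,S_j)$ arrows from $i$ to $j$ and $\dim\Ext^1(S_i,S_i)$ loops at $i$, traceless loops being marked. I abbreviate $\zeta=\rho^3$, which is a primitive $n$th root of unity because $(n,3)=1$, so that the single defining relation is $uv=\zeta vu$.

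The two extreme cases are short. For the Azumaya point $\mathfrak{m}=(u^n-a,v^n-b)$ with $a\neq 0\neq b$ the module $M$ is simple with trivial $\wis{PGL}_n$-stabilizer, and I would simply repeat Proposition \ref{prop:normaldecomposition}: the pairs $(e_1,0)$ and $(0,e_2)$ obtained by independently rescaling the images of $u$ and $v$ are linearly independent modulo the orbit and span $N$, giving one vertex with two loops. For the origin $\mathfrak{m}=(u^n,v^n)$ the module is the trivial representation with multiplicity $n$, so the quiver has a single vertex of dimension $n$; writing $u\mapsto \varepsilon A$, $v\mapsto \varepsilon B$ over $\C[\varepsilon]/(\varepsilon^2)$ the relation holds to first order for all $A,B$, while the trace preserving condition imposes, to first order, only $Tr(A)=Tr(B)=0$. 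Thus $N=\mathfrak{sl}_n^{\oplus 2}$ and the vertex carries two marked loops, as drawn.

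The delicate case is $\mathfrak{m}=(u^n,v^n-b)$ (and, symmetrically, $(u^n-a,v^n)$), where $M=\bigoplus_{i=0}^{n-1}S_i$ is a sum of $n$ pairwise non-isomorphic one dimensional simples on which $u$ acts as $0$ and $v$ acts as $\beta\zeta^i$. Deforming $u\mapsto \varepsilon A_1$ and $v\mapsto V_0+\varepsilon V_1$ with $V_0=\diag(\beta\zeta^i)$, the relation $uv=\zeta vu$ to first order becomes $A_1V_0=\zeta V_0A_1$, which forces $A_1$ to be supported on the cyclic superdiagonal; these $n$ entries are untouched by the orbit (whose $v$-part $[T,V_0]$ only absorbs the off-diagonal part of $V_1$, since $V_0$ has distinct eigenvalues) and produce the oriented $n$-cycle of the picture. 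I would then check that these superdiagonal deformations are automatically trace preserving, since every relevant trace $Tr(A_1V_0^{k})$ vanishes for a strictly off-diagonal matrix.

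What remains, and where I expect the real work to be, is the count of loops coming from the diagonal part of $V_1$. A priori this part is $n$-dimensional, one loop per vertex, but the trace preserving constraints $tr(v^k)=0$ for $1\leq k\leq n-1$ give, to first order, the $n-1$ conditions $Tr(V_0^{\,j}V_1)=0$ for $0\leq j\leq n-2$. Writing $V_1=\diag(d_i)$ these read $\sum_i \zeta^{ij}d_i=0$, a full-rank Vandermonde (discrete Fourier) system in the distinct powers $\zeta^{i}$, so exactly one dimension survives, namely the deformation of the central coordinate $v^n$ corresponding to $j=n-1$. This collapses the naive $n$ loops to the single loop drawn, and I would finish by verifying that no further arrows arise from cross terms, so that the local quiver is precisely the $n$-cycle together with one loop. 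The main obstacle is thus this Vandermonde reduction; everything else is bookkeeping identical to the three variable computation already carried out for $A$.
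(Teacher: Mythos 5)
Your computation is correct in all three cases, but note that the paper itself contains no proof of this statement: it is quoted from \cite{LBBook} (``Analogous as for $A$, one finds\ldots''), so what you have produced is the argument the paper delegates to the literature. Your route --- deform over $\C[\varepsilon]/(\varepsilon^2)$, quotient by the orbit tangent space, then impose the trace conditions --- is exactly the method the paper applies to $A$ itself in Section 3 (Proposition \ref{prop:normaldecomposition} and the computation at the semisimple point with $x,z\mapsto 0$), so it is the natural completion rather than a genuinely different approach. Two details are worth tightening. First, trace preservation must hold for \emph{all} elements, including central ones: since $tr$ is $Z$-linear with $tr(1)=n$, one has $tr(v^n)=nv^n$, and to first order this forces $V_0^{n-1}V_1$ to be a scalar matrix, i.e.\ $d_i=c\zeta^{i}$; this is precisely the kernel of your Vandermonde system, so nothing changes, but the check belongs in the proof rather than being omitted. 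Second, the surviving diagonal direction $d_i=c\zeta^{i}$ is not literally a deformation supported at one vertex: it is a one-dimensional \emph{weight-zero} representation of the stabilizer $(\C^*)^n$ of $M$, and only as such is it isomorphic, as a representation of that stabilizer, to a single loop placed at any one vertex. That identification --- standard in the (marked) local quiver formalism of \cite{LBBook}, where the \'etale local structure only depends on the module structure of $\Ext^{tr}$ --- is what justifies drawing ``the single loop'' at the vertex $a$ in the displayed quiver, and it deserves the sentence you currently elide.
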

The same is true for $\mathbb{X}(y^n)$ and $\mathbb{X}(z^n)$, so it follows that
\begin{theorem}
$\mathcal{A}$ is not a sheaf of Cayley-Smooth algebras over $\PP^2$. We have for the marked local quiver setting
\begin{itemize}
\item If $P \notin \mathbf{V}(x^ny^nz^n)$, then $\mathcal{A}$ is Azumaya in $P$ and the local quiver setting is of the first kind of theorem \ref{th:sheaflocal}.
\item If $P \in \mathbf{V}(x^ny^nz^n)$, then $P$ belongs to the ramification locus. If $P$ is not one of the singular points, then there are $n$ 1-dimensional representations of $\mathcal{A}$ lying above $P$ and the quiver setting is of the second kind of theorem \ref{th:sheaflocal}.
\item If $P$ is one of the singular points of $\mathbf{V}(x^ny^nz^n)$, then we have a unique 1-dimensional representation of $\mathcal{A}$ lying above $P$ and the local quiver is of the third kind.
\end{itemize}
\end{theorem}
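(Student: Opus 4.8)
The plan is to reduce everything to Theorem \ref{th:sheaflocal} by working on the standard affine cover of $\wis{proj}~Z(A)=\PP^2$ and exploiting that the marked local quiver setting is an \'etale-local invariant. First I would record that the three opens $\mathbb{X}(x^n),\mathbb{X}(y^n),\mathbb{X}(z^n)$ cover $\PP^2$: a relevant homogeneous prime of $Z(A)$ containing $x^n,y^n,z^n$ would, through the relation $uvw=g^n$, also contain $g^n$ and hence $g=xyz$, so it would contain the whole irrelevant ideal $(x^n,y^n,z^n,xyz)$ and not be a point of $\wis{proj}$. Thus every $P\in\PP^2$ has at least one of $x^n,y^n,z^n$ nonzero and lies in one of the three charts.

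Next, on $\mathbb{X}(x^n)$ we already know from Section 5 that $\Gamma(\mathbb{X}(x^n),\mathcal{A})\cong\C_{\rho^3}[u,v]$, a Cayley--Hamilton order of degree $n$ with center $\C[u^n,v^n]$ and $\wis{max}~\C[u^n,v^n]=\C^2$. Here the two coordinate axes $u^n=0$ and $v^n=0$ are exactly the traces on this chart of two of the three ramification lines (say $y^n=0$ and $z^n=0$), while the third line $x^n=0$ is the line at infinity invisible in this chart. Since the marked local quiver setting of $\mathcal{A}$ at $P$ is computed from the completion of $\Gamma$ at the maximal ideal of its center corresponding to $P$, it coincides with the marked local quiver setting of $\C_{\rho^3}[u,v]$ at that ideal. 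I would then read off the three cases of Theorem \ref{th:sheaflocal}: a point off both axes gives $\mathfrak{m}=(u^n-a,v^n-b)$ with $a,b\neq 0$, hence the first (Azumaya) kind; a point on exactly one axis gives $(u^n,v^n-b)$ or $(u^n-a,v^n)$, hence the second kind with $n$ distinct one-dimensional representations; and the origin $(u^n,v^n)$ gives the third kind.

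To globalise, I would match these local pictures with the stated trichotomy. A point $P\notin\mathbf{V}(x^ny^nz^n)$ lies off every line, so in any chart containing it both coordinate axes are avoided: first kind, Azumaya. A point of the ramification locus that is not an intersection lies on a single line; choosing a chart in which that line is one of the two visible axes puts $P$ on exactly one axis, giving the second kind. The three singular points of $\mathbf{V}(x^ny^nz^n)$ are the pairwise intersections of the lines; each such intersection is the origin $(u^n,v^n)=(0,0)$ of precisely one chart, namely the one indexed by the line \emph{not} passing through it, giving the third kind, and by the symmetry of the construction noted after Theorem \ref{th:sheaflocal} the same analysis applies verbatim on $\mathbb{X}(y^n)$ and $\mathbb{X}(z^n)$. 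Finally, the failure of Cayley-smoothness is witnessed at these three points: the origin of $\C_{\rho^3}[u,v]$ is not a Cayley-smooth point, since $\wis{trep}_n~\C_{\rho^3}[u,v]$ is singular there (the two-variable incarnation of the phenomenon of Section 3), so $\mathcal{A}$ cannot be a sheaf of Cayley-smooth algebras.

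The step I expect to be the main obstacle is the global bookkeeping in the overlaps: one must check that the classification is consistent where charts meet and, above all, that each of the three pairwise intersection points is correctly realised as the origin of exactly one chart while sitting on the invisible line at infinity of the other two. Verifying the coordinate identifications $u^n=0\Leftrightarrow\{y^n=0\}$ and $v^n=0\Leftrightarrow\{z^n=0\}$ on $\mathbb{X}(x^n)$ and their cyclic analogues, and hence that no singular point is miscounted or double-counted, is where the care is needed; the representation-theoretic content itself is entirely supplied by Theorem \ref{th:sheaflocal}.
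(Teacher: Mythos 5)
Your proposal is correct and follows essentially the same route as the paper, which states this theorem as an immediate consequence of the computation $\Gamma(\mathbb{X}(x^n),\mathcal{A})\cong\C_{\rho^3}[u,v]$ together with Theorem \ref{th:sheaflocal}, applied symmetrically on the three affine charts covering $\wis{proj}~Z(A)=\PP^2$. What you add --- the covering argument via the relation $uvw=g^n$, the identification of the chart axes with the ramification lines, and the check that each intersection point is the origin of exactly one chart --- is precisely the bookkeeping the paper leaves implicit.
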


\section{The noncommutative blow-up}
\subsection{Blow-up of a line} We will first describe the blow-up along the singular part of the center defined by the line $x^n=0,z^n=0$. In this case, the blow-up algebra $B$ is defined by the subalgebra of $A[t]$ with $t$ central generated by $A$ and $X=xt,Z=zt$. Let $I=(x,z)\triangleleft A$, then the gradation on $B$ is defined by
$$
B = A \oplus It \oplus I^2t^2 \oplus \ldots
$$
or in other words, $A$ is given degree 0 and $t$ degree 1. The relations of $B$ are given by the quantum relations (the relations of $A$), $ZX=\rho XZ$ and commutation relations such as $xX=Xx,xZ=Xz$, etc. It is also clear that $B$ is a Cayley-Hamilton algebra of degree $n$. The inclusion of $A$ in $B$ defines an epimorphism $\xymatrix{\wis{trep}^{ss}_n~B \ar@{->>}[r]&\wis{trep}^{ss}_n~A}$ which by composition gives an epimorphism $\xymatrix{\wis{trep}^{ss}_n~B \ar@{->>}[r]^\Pi& \wis{max}~Z(A)}$.

We have
\begin{proposition}
The dimension of $\wis{trep}_n~B$ is $n^2+3$.
\end{proposition}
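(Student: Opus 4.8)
The plan is to study the restriction morphism $r\colon \wis{trep}_n~B \to \wis{trep}_n~A$ induced by $A \hookrightarrow B$, which sends a representation $\phi$ to $(\phi(x),\phi(y),\phi(z))$ and forgets $\phi(X),\phi(Z)$. A point of $\wis{trep}_n~B$ is a tuple $(\phi(x),\phi(y),\phi(z),\phi(X),\phi(Z))\in\wis{M}_n(\C)^{\oplus 5}$ in which the first three matrices lie in $\wis{trep}_n~A$ and the remaining data satisfy the defining relations of $B$: the commutation relations $\phi(x)\phi(X)=\phi(X)\phi(x)$, $\phi(z)\phi(Z)=\phi(Z)\phi(z)$, $\phi(x)\phi(Z)=\phi(X)\phi(z)$, $\phi(z)\phi(X)=\phi(Z)\phi(x)$, the skew relations $\phi(X)\phi(y)=\rho\,\phi(y)\phi(X)$ and $\phi(y)\phi(Z)=\rho\,\phi(Z)\phi(y)$, and the quantum relation $\phi(Z)\phi(X)=\rho\,\phi(X)\phi(Z)$. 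Thus each fibre $r^{-1}(\phi(x),\phi(y),\phi(z))$ is cut out in $\wis{M}_n(\C)^{\oplus 2}$ by six \emph{linear} equations in $(\phi(X),\phi(Z))$ together with the single \emph{quadratic} equation coming from the quantum relation, and $\dim\wis{trep}_n~B=\max_W\big(\dim\overline{r(W)}+e_W\big)$ over components $W$, where $e_W$ is the generic fibre dimension of $r|_W$.

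For the lower bound I would exhibit the top component directly. For any $(\phi(x),\phi(y),\phi(z))\in\wis{trep}_n~A$ and any $c\in\C$, setting $\phi(X)=c\,\phi(x)$ and $\phi(Z)=c\,\phi(z)$ satisfies all of the relations above (each reduces to a quantum relation of $A$ or to a tautology) and is trace preserving; this defines a morphism $\wis{trep}_n~A\times\C\to\wis{trep}_n~B$ which is injective on the dense locus $\phi(x)\neq0$, so its image has dimension $(n^2+2)+1=n^2+3$ and $\dim\wis{trep}_n~B\geq n^2+3$. This family is the preimage of the Azumaya locus: one checks that $Z(B)=Z(A)[t]\cap B$ is a graded domain of Krull dimension $4$, a Rees-type algebra over the $3$-dimensional domain $Z(A)$ whose degree-one piece already contains $x^n t$, so over the dense open $\wis{Azu}_n~B\subseteq\wis{max}~Z(B)$ the quotient $\wis{trep}_n~B\to\wis{max}~Z(B)$ is a principal $\wis{PGL}_n$-fibration and the associated component has dimension $4+(n^2-1)=n^2+3$.

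It remains to prove the matching upper bound, which is the real work. Over the dense locus of simple representations with $\phi(x)$ (equivalently $\phi(z)$) invertible, the relation $\phi(x)\phi(Z)=\phi(X)\phi(z)$ together with the commutation and skew relations force $\phi(X)=c\,\phi(x)$, $\phi(Z)=c\,\phi(z)$ for a single scalar $c$, so the generic fibre of $r$ is $1$-dimensional. The estimate I would establish is that for every $e\geq2$ the jump locus $\{p\in\wis{trep}_n~A:\dim r^{-1}(p)\geq e\}$ has codimension at least $e-1$; granting it, every component $W$ obeys
\[
\dim W=\dim\overline{r(W)}+e_W\leq\big((n^2+2)-(e_W-1)\big)+e_W=n^2+3 .
\]
I would verify the estimate stratum by stratum along the vanishing patterns of $\phi(x),\phi(z)$ dictated by the normal forms of Theorem~\ref{th:normalform}: on strata where one of $\phi(x),\phi(z)$ stays invertible the fibre remains $1$-dimensional, so the jumps are concentrated on $\phi(x)=\phi(z)=0$. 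There the six linear relations alone leave a $2n$-dimensional space of pairs $(\phi(X),\phi(Z))$ (of the form $e_1\,p(e_2)$ and $e_1^{n-1}q(e_2)$ when $\phi(y)$ is regular), but the quantum relation $\phi(Z)\phi(X)=\rho\,\phi(X)\phi(Z)$ becomes a diagonal identity after diagonalising $\phi(y)$ and imposes $n$ further equations, trimming the fibre to dimension at most $n$ against a stratum of codimension at least $n+1$; at the origin the fibre is the $(n^2+1)$-dimensional variety $\wis{trep}_n~\C_\rho[X,Z]$ against codimension $n^2+2$.

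The main obstacle is precisely this codimension-versus-jump bound on the degenerate strata. Dropping the quadratic relation already produces loci larger than $n^2+3$ (for instance the $2n$-dimensional linear space above $\phi(x)=\phi(z)=0$ with $\phi(y)$ regular), so the argument must show that the single quantum relation $\phi(Z)\phi(X)=\rho\,\phi(X)\phi(Z)$, reinforced by the trace-preserving conditions, trims every fibre by the correct amount — uniformly, and in particular over the non-simple representations with $\phi(x)$ of intermediate rank sitting above the singular lines of $\wis{max}~Z(A)$, where neither the base dimension nor the fibre dimension is immediate. Controlling this trade-off across all strata at once, rather than the easy generic count, is the heart of the proof.
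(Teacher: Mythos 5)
Your proposal is an outline, not a proof: the entire upper bound rests on the jump-locus estimate (for every $e\geq 2$ the locus $\{p\in\wis{trep}_n~A:\dim r^{-1}(p)\geq e\}$ has codimension at least $e-1$), which you state, check on two easy strata, and then explicitly concede is unproven exactly where it is hard --- ``controlling this trade-off across all strata at once,'' in your words, ``is the heart of the proof.'' Concretely, what is missing is any control of the fibres of $r$ over non-semisimple points: for instance points $(0,N,0)$ with $N\neq 0$ nilpotent (these do lie in $\wis{trep}_n~A$, since all trace conditions vanish on nilpotents), or points above the singular lines where $\phi(x),\phi(z)$ are nonzero nilpotents of intermediate rank; there the linear relations no longer force $\phi(X)=c\,\phi(x)$, $\phi(Z)=c\,\phi(z)$, and neither the fibre dimension nor the stratum codimension is computed. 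Moreover, on the one degenerate stratum you do work out, the count is off by one: over $(0,\beta e_2,0)$ the quantum relation reads $x_{j-1}z_{j-1}=\rho\, x_j z_j$ cyclically, and only $n-1$ of these $n$ equations are independent (the relation around the full cycle is automatic), so the fibre has dimension $n+1$, not at most $n$ --- indeed the paper's own standard-form points $X\mapsto a e_1$, $Z\mapsto b e_2^{-1}e_1^{-1}$ over $y\mapsto d e_2$, swept out by the stabilizing torus, already give an $(n+1)$-dimensional family. This slip happens not to violate the inequality you need ($n+1\geq (n+1)-1$), but it shows the uniform estimate cannot be waved through. Your lower bound via the section $\phi(X)=c\,\phi(x)$, $\phi(Z)=c\,\phi(z)$ is correct and clean.

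For comparison, the paper avoids the stratified fibre analysis entirely. Its proof localizes at the Azumaya locus: for $\mathfrak{m}\in\wis{Azu}_n~A$ one has $B_\mathfrak{m}=A_\mathfrak{m}[t,t^{-1}]$, an Azumaya algebra over $Z(A)_\mathfrak{m}[t,t^{-1}]$; since $\wis{Azu}_n~A$ is precisely the smooth locus of $\wis{max}~Z(A)$, the tangent space at every point of $\wis{trep}^{ss}_n~B$ lying over $\mathfrak{m}$ has dimension $(n^2-1)+3+1=n^2+3$, and this locus is open, whence the claim. Note that this argument only pins down the dimension of components meeting the preimage of the Azumaya locus; the possibility of components concentrated over the non-Azumaya locus --- exactly the global issue your jump-locus estimate is designed to exclude --- is passed over by openness/density. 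So you have correctly located a real subtlety, but as written you have not resolved it: to complete your route you must actually prove the codimension estimate on the nilpotent and intermediate-rank strata, whereas the paper's route replaces all of this by the localization $B_\mathfrak{m}=A_\mathfrak{m}[t,t^{-1}]$ together with an (implicit) density statement.
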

\begin{proof}
For every maximal ideal $\mathfrak{m} \in \wis{Azu}_n~A$, the localisation of $B$ at $\mathfrak{m}$ is equal to
$$
B_\mathfrak{m}=A_\mathfrak{m}[t,t^{-1}]
$$
This means that $B_\mathfrak{m}$ is an Azumaya algebra over $Z(A)_\mathfrak{m}[t,t^{-1}]$. We know that the smooth locus of $\wis{max}~Z(A)$ equals $\wis{Azu}_n~A$, therefore the dimension of the tangent space in each point lying above $\mathfrak{m}$  in $\wis{trep}^{ss}_n~B$ is
$$
\dim T_\phi \wis{trep}^{ss}_n~B = n^2-1+ \dim T_\mathfrak{m} \wis{max}~Z(A) +1 
$$
with the $+1$ coming from the fact that $Z(A)[t,t^{-1}]=Z(A) \otimes \C[t,t^{-1}]$. This subset of $ \wis{trep}^{ss}_n~B$ is open and so the claim follows.
\end{proof}

\begin{proposition}
$\wis{trep}^{ss}_n~B$ is smooth in the inverse image of $\Pi$ for every point $P$ on the line $\mathbf{V}(x^n,z^n)$ except the point $P = (0,0,0,0)$.
\end{proposition}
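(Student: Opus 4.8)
The plan is to prove smoothness by a tangent space dimension count, after first establishing that every semistable representation lying over such a point is \emph{simple}.

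First I would determine which representations of $B$ lie in the fiber $\Pi^{-1}(P)$ over $P=(0,b,0,0)$, $b\neq 0$, and are semistable. Since $x^n,z^n,xyz$ all lie in the ideal $Z(A)\cap(x,z)$, every positive-degree central element of $B$ is assembled from $x^nt^n=X^n$, $z^nt^n=Z^n$ and $xyz\,t^2=XyZ$ (together with $xyz\,t=Xyz$), each of which acts as a scalar matrix on the fiber. Hence semistability forces $\phi(X)$ or $\phi(Z)$ to be invertible. The relation $Xy=\rho yX$ (respectively $Zy=\rho^{-1}yZ$) then shows that the spectrum of $\phi(y)$ is stable under multiplication by $\rho$; as $\phi(y)^n=b\,I_n$ with $b\neq 0$, the eigenvalues of $\phi(y)$ are $\rho$-invariant and lie among the $n$-th roots of $b$, so $\phi(y)$ is regular semisimple with the $n$ distinct such roots as eigenvalues and, up to conjugation, $\phi(y)=\beta e_2$. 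Feeding this back into $Xy=\rho yX$ forces $\phi(X)$ into the superdiagonal weighted-shift form, and $\phi(Z)$ into the subdiagonal form, of the matrices appearing in the tangent-space computation for $\wis{trep}_n~A$ in Section 3; the commutation relations $xX=Xx$, $zZ=Zz$, $xZ=Xz$, $zX=Zx$ together with the invertibility of $\phi(X)$ or $\phi(Z)$ then force $\phi(x)=\phi(z)=0$. Thus, up to conjugation, $\phi$ is $(0,\beta e_2,0,\mathcal{X},\mathcal{Z})$ with $\mathcal{X},\mathcal{Z}$ weighted cyclic shifts subject to $\mathcal{Z}\mathcal{X}=\rho\mathcal{X}\mathcal{Z}$, and since $e_2$ and an invertible weighted shift generate $\wis{M}_n(\C)$, such a $\phi$ is a simple representation of $B$.

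Granting simplicity, the orbit $\mathcal{O}(\phi)$ has dimension $n^2-1$, so $\dim_\C T_\phi\wis{trep}_n~B=(n^2-1)+\dim_\C\Ext_B^{tr}(M,M)$, and because $\wis{trep}_n~B$ has pure dimension $n^2+3$ it suffices to prove $\dim_\C T_\phi\wis{trep}_n~B=n^2+3$, i.e. $\dim_\C\Ext_B^{tr}(M,M)=4$. I would compute this by deforming $(x,y,z,X,Z)$ to $(\varepsilon x',\beta e_2+\varepsilon y',\varepsilon z',\mathcal{X}+\varepsilon X',\mathcal{Z}+\varepsilon Z')$ over $\C[\varepsilon]/(\varepsilon^2)$ and linearizing all relations. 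The relations $xy=\rho yx$ and $yz=\rho zy$ put $x'$ on the superdiagonal and $z'$ on the subdiagonal, reproducing exactly the oversized tangent space responsible for the singularity of $\wis{trep}_n~A$ here. The decisive step is that $xX=Xx$ and $zZ=Zz$ force $x'$ to commute with $\mathcal{X}$ and $z'$ with $\mathcal{Z}$; since a regular weighted shift has commutant $\C[\mathcal{X}]$, intersecting with the superdiagonal (resp. subdiagonal) condition gives $x'=\lambda\mathcal{X}$ and $z'=\mu\mathcal{Z}$, while $xZ=Xz$ and $zX=Zx$ impose $\lambda=\mu$. Thus the $2n$ excess directions collapse to a single parameter, and accounting for the remaining linearized relations $ZX=\rho XZ$, $Xy=\rho yX$, $Zy=\rho^{-1}yZ$ on $(X',Z',y')$ with the trace-preserving conditions yields exactly $n^2+3$.

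Finally I would treat the two boundary configurations $\mathcal{Z}=0$ (with $\mathcal{X}$ invertible) and $\mathcal{X}=0$ (with $\mathcal{Z}$ invertible), which are the distinguished points of the first main theorem. Here the collapse argument is modified: when $\mathcal{Z}=0$ the relation $zZ=Zz$ no longer constrains $z'$, but $xZ=Xz$ degenerates to $\mathcal{X}z'=0$, and invertibility of $\mathcal{X}$ forces $z'=0$, so the tangent space is again $n^2+3$ and $\wis{trep}^{ss}_n~B$ stays smooth; the $\C\times\C^2/\Z_n$ singularity visible after the quotient arises not from $\wis{trep}^{ss}_n~B$ but from the $\Z_n$-stabilizer of such $\phi$ inside $\wis{PGL}_n\times\C^*$. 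The main obstacle is precisely the \emph{uniformity} of the dimension count across the whole fiber: one must check that the excess directions are annihilated at every semistable point—both generically, where both shifts are invertible, and at the two boundary points where exactly one is—and that no new excess appears. This is exactly the mechanism that breaks down at the origin $(0,0,0,0)$, where $\phi(y)$ is no longer regular semisimple and the representations cease to be simple.
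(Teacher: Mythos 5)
Your proposal is correct and follows essentially the same route as the paper's proof: reduce to a standard-form (necessarily simple) semistable representation in the fiber and do a dual-numbers tangent-space count, in which the deformations of $(X,y,Z)$ contribute the $n^2+2$ parameters of $\wis{trep}_n~A$ at an Azumaya point while the Rees-type relations ($xX=Xx$, $zZ=Zz$, $xZ=Xz$) collapse the deformations of $(x,z)$ to a single parameter, with the same case split ($\phi(Z)$ invertible versus $\phi(Z)=0$) giving total dimension $n^2+3$. The differences are organizational rather than substantive: you make explicit the classification and simplicity of the semistable points in the fiber (which the paper assumes implicitly via the standard form) and you phrase the count as orbit dimension plus $\dim_\C \Ext^{tr}_B(M,M)$, where the paper counts $5$-tuples of matrices directly and constrains the extra two via the subalgebras of $B$ isomorphic to $A$.
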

\begin{proof}
Let $\phi \in \wis{trep}^{ss}_n~B$ and assume $\phi(X)$ is invertible. By assumption $\phi(y)$ is also invertible and we may assume that the triple $(\phi(X),\phi(y),\phi(Z))$ is in standard form. We need to find the dimension of the solution set of 5-tuples of matrices $(A,B,C,D,E)$ such that 
$$
\phi(X) + \varepsilon A, \phi(Z) + \varepsilon B, \varepsilon C, \phi(y)+\varepsilon D, \varepsilon E
$$
is a representation of the blow-up algebra $B$ over the dual numbers. The subalgebra of $B$ generated by $X,Z$ and $y$ is isomorphic to $A$ itself and $\phi$ induces a simple representation of this subalgebra, so we know that $A,B$ and $D$ depend on $n^2+2$ parameters. From the relation $Xz = xZ$ it follows that $\phi(X) E = C \phi(Z)$. Now, two things can happen:
\begin{itemize}
\item $\phi(Z)$ is invertible: then $C=\phi(X) E \phi(Z)^{-1}$. The subalgebra generated by $X,z$ and $y$ is isomorphic to $\mathbb{C}_\rho[x,y,z]$ and $\phi$ determines a simple representation with $\phi(z)=0$, therefore $E = f_1 e_2^{-1}e_1^{-1}$. This implies that $E$ only depends on 1 parameter and $C$ is uniquely determined by $E$.
\item $\phi(Z) = 0$: because $\phi(X)$ is invertible, $E = 0$. From the relations $Xx = xX$ and $xy = \rho yx$ one deduces that $C$ belongs to the vector space generated by $e_1$.
\end{itemize}
In both cases, we find that $D$ and $C$ depend on one parameter, from which smoothness follows.
\end{proof}

In fact, we find a similar decomposition for the normal as in proposition \ref{prop:normaldecomposition}.
\begin{proposition}
The normal $N_{\wis{PGL}_n}$ in a simple module $M$ of $B$ determined by matrices in standard form $(\phi(X),\phi(Z),\phi(x),\phi(y),\phi(z))$ with $\phi(y) \neq 0$ is determined by the subspace 
$$ V=(a_1 e_1,b_1 e_2^{-1}e_1^{-1},c_1 e_1,d_1 e_2,f_1 e_2^{-1}e_1^{-1}) \in \wis{M}_n(\C)^{\oplus 5}$$
and one extra relation coming from the relation $Xz=xZ$ holding in $B$.
\end{proposition}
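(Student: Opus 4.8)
The plan is to run the dual-numbers computation of Proposition~\ref{prop:normaldecomposition}, now for all five generators $X,Z,x,y,z$ of $B$. After placing $M$ in standard form, so that $\phi(X)=\alpha e_1$, $\phi(y)=\beta e_2$, $\phi(Z)=\gamma e_2^{-1}e_1^{-1}$ and $\phi(x)=\phi(z)=0$, I would represent a tangent vector to $\wis{trep}_n~B$ at $M$ by a $5$-tuple $(A,B,C,D,E)$ of perturbations of $(\phi(X),\phi(Z),\phi(x),\phi(y),\phi(z))$ and demand that every defining relation of $B$ hold modulo $\varepsilon^2$. Since $\langle X,y,Z\rangle\cong A$ and $\phi$ restricts there to the generic simple representation, Proposition~\ref{prop:normaldecomposition} hands me the three directions $A=a_1e_1$, $D=d_1e_2$, $B=b_1e_2^{-1}e_1^{-1}$, which are independent modulo the $\wis{PGL}_n$-orbit; as $\phi(x)=\phi(z)=0$ the orbit directions vanish in the $x$- and $z$-slots, so the whole orbit sits in this $\langle X,y,Z\rangle$-block and these three directions already represent its contribution to $N_{\wis{PGL}_n}$.

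The substance of the argument is to trap the two remaining perturbations $C$ (of $x$) and $E$ (of $z$) in a one-dimensional space each. Because $\phi(x)=0$, the relation $xy=\rho yx$ linearises to $C\,\phi(y)=\rho\,\phi(y)\,C$, forcing $C$ to be a $\rho$-twisted shift of the same type as $e_1$, while $xX=Xx$ linearises to $C\,\phi(X)=\phi(X)\,C$, i.e. $C$ commutes with $e_1$. A short weighted-circulant computation shows that the only matrices meeting both conditions are the scalars $C=c_1e_1$. Symmetrically, $yz=\rho zy$ and $zZ=Zz$ force $E$ to be a shift commuting with $e_2^{-1}e_1^{-1}$, hence $E=f_1e_2^{-1}e_1^{-1}$. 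This confines the tangent space to the five-parameter space $V$.

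It then remains to verify that the other relations add nothing and to isolate the single genuine constraint. The relation $zx=\rho xz$ is quadratic in the two vanishing matrices $\phi(x),\phi(z)$, so its first-order part is trivial, and substituting $C=c_1e_1$, $E=f_1e_2^{-1}e_1^{-1}$ shows that $zX=\rho Xz$ and $Zx=\rho xZ$ reduce to the Heisenberg identity $e_2^{-1}e_1=\rho e_1e_2^{-1}$ and hold for all $(c_1,f_1)$. The only relation coupling $C$ and $E$ is $Xz=xZ$: both sides vanish at $M$, and its linearisation is $\phi(X)\,E=C\,\phi(Z)$, that is $\alpha f_1\,e_1e_2^{-1}e_1^{-1}=\gamma c_1\,e_1e_2^{-1}e_1^{-1}$, giving the one linear relation $\alpha f_1=\gamma c_1$. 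Cutting $V$ by it leaves a four-dimensional space, in agreement with $\dim\wis{trep}_n~B-\dim\mathcal{O}(M)=(n^2+3)-(n^2-1)$.

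The main obstacle I expect is the pair of commutant computations pinning $C$ and $E$ to one dimension: one must check that being simultaneously a $\rho$-twisted shift (from the quantum relation with $y$) and a commutant of $e_1$, respectively $e_2^{-1}e_1^{-1}$, leaves no extra freedom, which is precisely where the primitivity of $\rho$ and the irreducibility of $V_\rho$ enter. A secondary care point is genericity: the constraints on $C$ and $E$ use $\phi(X)$ and $\phi(Z)$ invertible, so the clean description by $V$ and the single relation is the one valid away from the two special points $\alpha=0$ and $\gamma=0$, where $\alpha f_1=\gamma c_1$ degenerates and the quotient singularity $\C\times\C^2/\Z_n$ of the introduction appears.
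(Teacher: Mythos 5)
Your argument is correct on the intended locus ($\phi(x)=\phi(z)=0$, $\phi(X)$, $\phi(Z)$ generically invertible) and reaches the statement by a genuinely different route in the step that matters. The paper's proof never touches commutants: it observes that $B$ contains three subalgebras $\langle X,y,z\rangle$, $\langle x,y,Z\rangle$, $\langle X,y,Z\rangle$, each isomorphic to $A$, notes that $M$ restricts to a simple module of at least two of them, and applies Proposition \ref{prop:normaldecomposition} to those copies to get all five slots of $V$ at once (the $x$-slot from $\langle x,y,Z\rangle$, the $z$-slot from $\langle X,y,z\rangle$); that $Xz=xZ$ then cuts out exactly one nontrivial relation is not computed but deduced from the previously proved smoothness of $\wis{trep}^{ss}_n~B$ at these points, which makes $N$ $4$-dimensional. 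You instead invoke Proposition \ref{prop:normaldecomposition} only for $\langle X,y,Z\rangle$ and pin the degree-zero perturbations by hand: the linearizations of $xy=\rho yx$ and $xX=Xx$ (resp.\ $yz=\rho zy$ and $zZ=Zz$) force $C=c_1e_1$ and $E=f_1e_2^{-1}e_1^{-1}$ by a weighted-shift/commutant computation, the linearization of $Xz=xZ$ is written out explicitly as $\alpha f_1=\gamma c_1$, and the remaining cross relations are checked to be vacuous. Your route is longer but buys two things: the explicit form of the extra relation, which the paper in fact needs later anyway (the step ``$0=\varepsilon a e_1 f_1 e_2^{-1}e_1^{-1}$, so $f_1=0$'' in the theorem that follows), and an independent derivation of $\dim N=4$ rather than an appeal to the smoothness proposition. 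One remark on your closing caveat: it is slightly too cautious. At the special points $\alpha=0$ or $\gamma=0$ your commutant argument for one of $C$, $E$ indeed breaks down (just as the paper's argument loses one of its three simple restrictions there), but the linearized relation $\phi(X)E=C\phi(Z)$ then kills the unconstrained perturbation outright, giving $c_1=0$, resp.\ $f_1=0$; so the conclusion ``$V$ cut by the one relation coming from $Xz=xZ$'' persists at those points, merely with a degenerate relation, and this is exactly the mechanism the paper exploits at the points $0$ and $\infty$ of the exceptional $\PP^1$.
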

\begin{proof}
This follows directly from proposition \ref{prop:normaldecomposition} as the algebra generated by $X,y,z$, the algebra generated by $x,y,Z$ and the algebra generated by $X,y,Z$ are all isomorphic to $A$. As we are working in $\wis{trep}^{ss}_n~B$ and $\phi(y)\neq 0$, $M$ is a simple module of at least two of the three subalgebras. It follows that $N_{\wis{PGL}_n}$ is indeed a subspace of $V$. The relation $Xz=xZ$ is the only relation we haven't used and it follows that this defines a non-trivial subspace of $V$, as we know that $N$ is 4-dimensional.
\end{proof}
However, this doesn't necessarily mean that $\wis{trep}^{ss}_n~B/\wis{PGL}_n \times \C^*$ is smooth, as the stabilizer is not necessarily trivial. 
\begin{theorem}
In the partial desingularization of the center $\wis{max}~Z(A)$ by $\wis{trep}^{ss}_n~B//\wis{PGL}_n \times \C^*$, for every point $\mathfrak{m} \neq (x^n,y^n,z^n,xyz)$ on the line $\mathbf{V}(x^n,z^n)$, we have that every point on $\pi^{-1}(\mathfrak{m}) = \mathbb{P}^1$ is smooth except for the points $0$ and $\infty$, where the singularity type is given by $\C \times \C^2/\Z_n$, where the action of $\Z_n$ for $0$ is defined by $\begin{bmatrix}\rho^2 & 0 \\ 0 & \rho^{-1}\end{bmatrix}$ and for $\infty$ by $\begin{bmatrix}\rho^{-2} & 0 \\ 0 & \rho\end{bmatrix}$.
\end{theorem}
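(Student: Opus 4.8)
The plan is to read off the \'etale-local structure of the quotient at each point of the exceptional fibre by the slice theorem of \cite{LBBook}: near a point represented by a semisimple module $M$ with closed orbit and reductive stabiliser $\mathrm{Stab}(M)$, the quotient $\wis{trep}^{ss}_n~B//\wis{PGL}_n\times\C^*$ is \'etale-locally $N//\mathrm{Stab}(M)$, where $N$ is the normal space to the $\wis{PGL}_n\times\C^*$-orbit. First I describe the fibre. The point $\mathfrak{m}\neq(x^n,y^n,z^n,xyz)$ on $\mathbf{V}(x^n,z^n)$ is $(0,b,0,0)$ with $b\neq 0$, and by Theorem \ref{th:normalform} the semisimple $A$-module above it has $\phi(x)=\phi(z)=0$, $\phi(y)=\beta e_2$. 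Hence every semistable $B$-module in $\Pi^{-1}(\mathfrak{m})$ has $\phi(x)=\phi(z)=0$, so that $(\phi(X),\phi(y),\phi(Z))$ is a representation of the copy $\langle X,y,Z\rangle\cong A$ lying over $y^n=b$; running over the possible $(\phi(X),\phi(Z))$ and dividing by $\wis{PGL}_n\times\C^*$ sweeps out the exceptional $\PP^1$, whose coordinate is the ratio of $\phi(X)$ and $\phi(Z)$.

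Next I compute stabilisers. For a generic point both $\phi(X),\phi(Z)$ are nonzero, and in standard form $(\alpha e_1,\beta e_2,\gamma e_2^{-1}e_1^{-1})$ the stabiliser equations force a diagonal $g=e_2^s$ with $\lambda=\rho^s$ from the $X$-equation and $\lambda=\rho^{-s}$ from the $Z$-equation, whence $\rho^{2s}=1$ and (as $(n,2)=1$) $s=0$: the stabiliser is trivial, so the slice theorem gives smoothness there. The stabiliser jumps exactly at the two points $0=\{\phi(X)=0\}$ and $\infty=\{\phi(Z)=0\}$, where the vanishing generator deletes one of the two equations; a direct computation then gives $\mathrm{Stab}(M)=\langle(e_2^{-1},\rho)\rangle\cong\Z_n$ at $0$ and $\langle(e_2,\rho)\rangle\cong\Z_n$ at $\infty$.

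It remains to compute $N$ as a $\Z_n$-module at these two points. I start from the deformation ansatz of Proposition \ref{prop:normaldecomposition}, moving the five matrices $(\phi(X),\phi(Z),\phi(x),\phi(y),\phi(z))$ along $e_1,\,e_2^{-1}e_1^{-1},\,e_1,\,e_2,\,e_2^{-1}e_1^{-1}$ respectively. The linearised relation $Xz=xZ$ kills the deformation of the small generator multiplying the surviving large one, namely $\phi(x)$ at $0$ and $\phi(z)$ at $\infty$, leaving a $4$-dimensional $N_{\wis{PGL}_n}$; removing the one-dimensional $\C^*$-orbit direction (the $\phi(Z)$-slot at $0$, the $\phi(X)$-slot at $\infty$) leaves the $3$-dimensional $N$. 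Evaluating the stabiliser generator $(e_2^{-1},\rho)$ on the three surviving directions at $0$ gives weights $\rho^0$ on the $\phi(y)$-direction, $\rho^2$ on the $\phi(X)$-direction and $\rho^{-1}$ on the $\phi(z)$-direction. The $\rho^0$-direction is precisely the motion of $\mathfrak{m}$ along $\mathbf{V}(x^n,z^n)$, giving the factor $\C$, while the remaining two assemble into a $\C^2$ with action $\diag(\rho^2,\rho^{-1})$. The symmetric computation at $\infty$ (swapping $x\leftrightarrow z$, $X\leftrightarrow Z$, which replaces $\rho$ by $\rho^{-1}$) yields $\diag(\rho^{-2},\rho)$, so that $N//\Z_n\cong\C\times\C^2/\Z_n$ with the stated actions.

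The main obstacle is exactly this last normal-space computation at $0$ and $\infty$. There the hypothesis of Proposition \ref{prop:normaldecomposition} breaks down: $M$ is no longer simple over all three copies of $A$ inside $B$, and since conjugation does not move the vanishing generators one cannot reduce those deformation slots by the $\wis{PGL}_n$-action. One must therefore re-derive the confinement of each tangent direction to a single quantum eigendirection by hand, using the full list of commutation relations (for instance $zZ=Zz$ together with $yz=\rho zy$ to pin the $\phi(z)$-deformation to a multiple of $e_2^{-1}e_1^{-1}$), and keep careful track of the $\rho$ versus $\rho^{-1}$ bookkeeping in $\mathrm{ad}(e_2^{\pm1})$ so that the two $\Z_n$-actions come out correctly.
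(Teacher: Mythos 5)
Your proposal is correct and follows essentially the same route as the paper's own proof: put the fibre over $\mathfrak{m}$ in standard form, show the $\wis{PGL}_n\times\C^*$-stabilizer is trivial at generic points of the exceptional $\PP^1$, identify the $\Z_n$-stabilizers at the two special points, use the linearized relation $Xz=xZ$ to kill one degree-zero deformation slot, remove the $\C^*$-orbit direction, and read off the $\Z_n$-weights to get $\C\times\C^2/\Z_n$ with actions $\diag(\rho^{\pm2},\rho^{\mp1})$. The differences are cosmetic: your stabilizer generators differ from the paper's (it writes $\langle(e_2,\rho^{-1})\rangle$ and $\langle(e_2,\rho)\rangle$) by the conjugation convention and a choice of generator of the same cyclic group, the paper additionally treats even $n$ (finding a harmless $\Z_2$-stabilizer at generic points), and the ``obstacle'' you flag at the end is resolved there exactly as you suggest, by pinning each deformation slot to its quantum eigendirection through the commutation relations, as in the paper's preceding smoothness and normal-decomposition propositions.
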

\begin{proof}
We will calculate $N_{\wis{PGL}_n\times \C}$ at all points lying over $\mathfrak{m}$. We can assume that up to basechange the representation $\phi$ is given by the following matrices (in standard form)
\begin{gather*}
X \mapsto a e_1, Z \mapsto b e_2^{-1}e_1^{-1}, x \mapsto 0, y \mapsto d e_2, z \mapsto 0, 
\end{gather*}
and suppose first that $a \neq 0 \neq b$ and by assumption $d \neq 0$. The $\wis{PGL}_n\times \C$-stabilizer of $e_2$ is equal to $T_n/\C^*I_n \times \C^*$ (as the $\C^*$ action is trivial because $y$ is in degree 0). In order to have a non-trivial stabilizer, we must have that an element $g \in T_n/\C^*$ acting on $\phi(X)$ is a multiple of $\phi(X)$ or equivalently, $g^{-1} e_1 g = \lambda e_1$. This forces that $g = e_2^k$ for some $k \in \N$. We may assume that $g = e_2$ and so the cyclic subgroup $\langle e_2,\rho^{-1} \rangle$ stabilizes $\phi(X)$. However, if we calculate the action of any element of this subgroup on $\phi(Z)$, we get
$$
\rho^{-k} e_2^{-k} b e_2^{-1} e_1^{-1} e_2^{k}= \rho^{-2k} b e_2^{-1} e_1^{-1}
$$
If $(2,n) = 1$, $\rho^{-2k} = 1$ if and only if $k$ is a multiple of $n$. So the stabilizer is trivial in this case and consequently, the tangent space at the corresponding point of $\wis{trep}^{ss}_n~B//\wis{PGL}_n \times \C^*$ is 3-dimensional. If $2$ divides $n$, say $2n=k$, then there is indeed a stabilizer isomorphic to $\Z_2$ determined by $\langle (e_2^k,\rho^{-k}) \rangle$. If we look at the action of this stabilizer on $N_{\wis{PGL}_n}$, we find 
\begin{gather*}
(e_2^k,\rho^{-k}) \cdot e_1 = e_1 \\
(e_2^k,\rho^{-k}) \cdot e_2^{-1}e_1^{-1} =e_2^{-1}e_1^{-1}
\end{gather*}
for the degree 1 part and for the degree 0 part
\begin{gather*}
(e_2^k,\rho^{-k}) \cdot e_1 = -e_1 \\
(e_2^k,\rho^{-k}) \cdot e_2 = e_2\\
(e_2^k,\rho^{-k}) \cdot e_2^{-1}e_1^{-1} =-e_2^{-1}e_1^{-1}
\end{gather*}
We still need to divide out 1 relation determined by $Xz = xZ$, which amounts to removing one arrow of weight 1. This means that locally, the corresponding point in $\wis{trep}^{ss}_n~B//\wis{PGL}_n \times \C^*$ looks like $\C^3/\Z_2 = \C/\Z_2 \times \C^2$. As a consequence, $\wis{trep}^{ss}_n~B//\wis{PGL}_n \times \C^*$ is smooth in the corresponding point.

Suppose now that $b = 0$. The relation $xZ = Xz$ implies over the dual numbers that
$$
0 = \varepsilon a e_1 f_1 e_2^{-1}e_1^{-1}
$$
so $f_1 = 0$. In degree 1 we find
\begin{gather*}
(e_2,\rho^{-1})\cdot e_1 = e_1 \\
(e_2,\rho^{-1})\cdot e_2^{-1}e_1^{-1} = \rho^{-2} e_2^{-1}e_1^{-1}
\end{gather*}
and in degree 0
\begin{gather*}
(e_2,\rho^{-1}) \cdot e_1 = \rho e_1 \\
(e_2,\rho^{-1}) \cdot e_2 = e_2
\end{gather*}
This means that the weighted quiver setting associated to $N_{\wis{PGL}_n}$ is given by
\begin{center}
\begin{tikzpicture}
   \node[vertice] (a) at (0,0) {$1$};
   \tikzset{every node/.style={fill=white}} 
   \path[->] (a) edge[out=30,in=-45,loop]  (a);
   \path[->] (a) edge[out=225,in=150,loop]node{$\boxed{-2}$} (a);
   \path[->] (a) edge[out=130,in=50,loop]node{$\boxed{1}$} (a);   
   \path[->] (a) edge[out=-50,in=-130,loop] (a);   
\end{tikzpicture}
\end{center}
To get $N_{\wis{PGL}_n\times \C^*}$, we need to divide out the action of $I_n+\varepsilon t$, which means taking away one arrow of weight 0
\begin{center}
\begin{tikzpicture}
   \node[vertice] (a) at (0,0) {$1$};
   \tikzset{every node/.style={fill=white}} 
   \path[->] (a) edge[out=30,in=-45,loop]  (a);
   \path[->] (a) edge[out=225,in=150,loop]node{$\boxed{-2}$} (a);
   \path[->] (a) edge[out=130,in=50,loop]node{$\boxed{1}$} (a);   
\end{tikzpicture}
\end{center}
A similar calculation shows that when $a = 0$, the stabilizer is given by $\langle (e_2,\rho) \rangle$ and in this case $c_1 = 0$. Similarly, one shows that $N_{\wis{PGL}_n\times \C^*}$ has weighted quiver setting 
\begin{center}
\begin{tikzpicture}
   \node[vertice] (a) at (0,0) {$1$};
   \tikzset{every node/.style={fill=white}} 
   \path[->] (a) edge[out=30,in=-45,loop]  (a);
   \path[->] (a) edge[out=225,in=150,loop]node{$\boxed{2}$} (a);
   \path[->] (a) edge[out=130,in=50,loop]node{$\boxed{-1}$} (a);   
\end{tikzpicture}
\end{center}
leading to the claimed singularity type.
\end{proof}
The unadorned loop in the local quiver settings of the central singularities obtained in the theorem mean that there is a 1-dimensional family of similar singularities near each point.

Suppose now that $n$ is not divisible by 2. As $B$ is a finite module over its center $Z(B)$, it defines a coherent sheaf of Cayley-Hamilton algebras $\mathcal{B}$ over $\wis{proj}~Z(B)$ in the same way as that $A$ defined a sheaf $\mathcal{A}$ on $\PP^2$. On the open subset $X^n \neq 0$, we have
$$
\Gamma(\mathbb{X}(X^n),\mathcal{B})=\C \langle x,y,z,zx^{-1} \rangle \cong \frac{\C \langle u,v,w\rangle}{(uv-\rho vu,vw-\rho^2 wv,wu-\rho uw)}
$$
for which the representations are defined by $(a,b,c) \in \C^3$ and matrices
$$
u \mapsto a e_1, v \mapsto b e_2, w \mapsto c e_2^{-1}e_1^{-2}
$$
From this we easily see that above the point $(0,b^n,0,0) \in \wis{max}~Z(A)$ with $b \neq 0$ the following happens in $\wis{trep}_n~ \mathcal{B}$:
\begin{itemize}
\item if $c \neq 0$, then we know that the corresponding point of $\wis{proj}~Z(B)$ is smooth and as $(2,n) = 1$, $\mathcal{B}$ is Azumaya in this point. Therefore, $\mathcal{B}$ is Cayley smooth in this point of $\wis{proj}~Z(B)$.
\item if $c=0$, then we know that there is a singularity in the corresponding point of $\wis{trep}^{ss}_n~B//\wis{PGL}_n \times \C^*$. This representation is again a semi-simple representation that decomposes as a direct sum of 1-dimensional representations.
\end{itemize}
An analogous result as \ref{prop:normaldecomposition} holds for the normal $N_{\wis{PGL}_n}$ and we can prove
\begin{theorem}
The local quiver setting for a semi-simple but not simple representation in $\wis{trep}_n~\mathcal{B}$ over the point $(0,b^n,0,0) \in \wis{max}~Z(A), b \neq 0$ of $\mathcal{B}$ is given by 
\begin{center}
\begin{tikzpicture}
   \node[vertice] (a) at (0,0) {$1$};
   \node[vertice] (b) at (2,0) {$1$};
   \node[vertice] (c) at (4,-2) {$1$};
   \node[vertice] (d) at (-2,-2) {$1$};
   
   \path[->] (a) edge[out=0,in=180] (b);
   \path[->] (b) edge[out=-150,in=25] (d);
   \path[->] (b) edge[out=-45,in=135] (c);
   \path[->] (c) edge[out=150,in=-25] (a);
     
   \path[->] (d) edge[out=45,in=-135] (a);   
   
   \path[-,dashed] (c) edge[out=-160,in=-20] (d);
   
   \path[->] (a) edge[out=180,in=90,loop] (a);
   
\end{tikzpicture}
\end{center}
Therefore, the defect stays equal to $n-1$.
\end{theorem}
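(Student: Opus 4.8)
The plan is to compute the local quiver as the weight-space decomposition of the normal space $N_{\wis{PGL}_n}=\Ext^{tr}(M,M)$ of the semisimple module $M$ under its stabilizer torus, exactly in the spirit of proposition \ref{prop:normaldecomposition}. First I would fix the representation lying over $(0,b^n,0,0)$: taking $a=c=0$ in the parametrization $u\mapsto a e_1,\ v\mapsto b e_2,\ w\mapsto c e_2^{-1}e_1^{-2}$ gives $M$ determined by $u\mapsto 0,\ v\mapsto b e_2,\ w\mapsto 0$. Since $e_2=\diag(1,\rho,\ldots,\rho^{n-1})$ has distinct eigenvalues and $u,w$ act as $0$, the module splits as $\bigoplus_{i=0}^{n-1}S_i$ with $S_i=\C x_i$ the $1$-dimensional simple on which $v$ acts by $b\rho^i$. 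The $n$ summands are pairwise non-isomorphic, so the stabilizer of $M$ is the image of the diagonal torus $T_n$, and the vertices of the local quiver are the $S_i$, each with multiplicity $1$.

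Second I would linearize the three defining relations of $R$ at $M$ over $\C[\varepsilon]/(\varepsilon^2)$. Writing a deformation as $u=\varepsilon A,\ v=b e_2+\varepsilon B,\ w=\varepsilon C$, the relation $uv=\rho vu$ becomes $A e_2=\rho e_2 A$, forcing $A_{ij}=0$ unless $j\equiv i+1 \bmod n$; the relation $vw=\rho^2 wv$ becomes $e_2 C=\rho^2 C e_2$, forcing $C_{ij}=0$ unless $i\equiv j+2 \bmod n$; and $wu=\rho uw$ is vacuous to first order, since both $\varepsilon A$ and $\varepsilon C$ are nilpotent and their products are $O(\varepsilon^2)$. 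Reading off weight spaces under $T_n$, the surviving entries of $A$ sit in the positions $(i,i+1)$ and contribute an $n$-cycle of step $1$, while those of $C$ sit in $(i,i-2)$ and contribute an $n$-cycle of step $2$ (a single cycle, as $(2,n)=1$). These are precisely the two superimposed cycles drawn in the figure.

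Third comes the analysis of the deformation $B$ of $v$, which I expect to be the main obstacle. The off-diagonal part of $B$ is a coboundary, absorbed by the tangent space $[T,b e_2]$ to the $\wis{PGL}_n$-orbit, so only the diagonal part $B=\diag(t_0,\ldots,t_{n-1})$ can produce loops. A naive count would yield $n$ loops, but the trace preserving condition cuts this down: since $v^n$ is central, $\phi(v^n)$ must remain scalar, and expanding $(b\rho^i+\varepsilon t_i)^n=b^n+\varepsilon\, n b^{n-1}\rho^{-i}t_i$ forces $\rho^{-i}t_i$ to be independent of $i$, i.e. $B\in\C e_2$. Hence exactly one loop survives, recovering the single unadorned loop of the figure and completing the identification of the quiver as two $n$-cycles together with one loop.

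Finally I would read off the defect from the arrow count. The quiver has $n+n+1=2n+1$ arrows and $\sum_i e_i^2=n$, while $R$ is a quantum affine $3$-space that is finite over a $3$-dimensional center, so that $\dim\wis{trep}_n~R=n^2+2$ by the same principal $\wis{PGL}_n$-fibration argument as for $A$. Substituting into $\wis{defect}=\dim_\C\Ext^{tr}(M,M)+(n^2-\sum_i e_i^2)-\dim\wis{trep}_n~R$ gives $(2n+1)+(n^2-n)-(n^2+2)=n-1$, so the defect is unchanged by the blow-up, as claimed.
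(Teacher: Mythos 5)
Your proposal is correct and is essentially the paper's own (largely unwritten) argument: the paper states this theorem with no proof beyond invoking the analogue of Proposition~\ref{prop:normaldecomposition}, and your computation --- linearizing the three relations at $u\mapsto 0$, $v\mapsto b e_2$, $w\mapsto 0$ to get the step-$1$ and step-$2$ cycles, absorbing the off-diagonal part of $B$ into the $\wis{PGL}_n$-orbit, and using trace preservation (equivalently, that $\phi(v^n)$ stays scalar) to cut the diagonal part down to $\C e_2$, giving the single loop --- is exactly the omitted normal-space analysis. Your defect count $(2n+1)+(n^2-n)-(n^2+2)=n-1$ likewise confirms the paper's final claim.
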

Of course, the same is true for the blow-up algebra at the ideals $(x,y)$ and $(y,z)$.

\subsection{Blow-up of the origin} The next thing we want to do is to do a blow-up at the maximal ideal $\mathfrak{m}=(x,y,z)$. Let
$$
B = A \oplus\mathfrak{m}t \oplus \mathfrak{m}^2t^2 \oplus \ldots
$$
The dimension of $\wis{trep}^{ss}_n~B$ is again $n^2+3$. $B$ is again a Cayley-Hamilton algebra and a finite module over its center $Z(B))$. We want to study how $\wis{trep}^{ss}_n~B//\wis{PGL}_n \times \C^*$ looks like over the point $(0,0,0,0) \in \wis{max}~Z(A)$. Let $X=xt,Y=yt,Z=zt$. We find
\begin{theorem}
In the partial resolution of singularities determined by $\xymatrix{\wis{trep}^{ss}_n~B//\wis{PGL}_n \times \C^* \ar@{->>}[r]& \wis{max}~Z(A)}$, all the points over $\mathfrak{m}$ are smooth except for the three lines which form the point modules of $A$. 
\end{theorem}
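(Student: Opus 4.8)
The plan is to work chart by chart on the exceptional $\PP^2$ and to reduce everything to the representation theory of the section algebra of $\mathcal{B}$, exactly as in Section 5, using the $\C^*$-action to pass to degree-$0$ sections. First I would cover the exceptional fibre over the origin by the three affine opens $\mathbb{X}(X^n),\mathbb{X}(Y^n),\mathbb{X}(Z^n)$ and compute, say on $\mathbb{X}(X^n)$, the section
\[
S:=\Gamma(\mathbb{X}(X^n),\mathcal{B})=\C\langle x,\,s,\,w\rangle,\qquad s=YX^{-1}=yx^{-1},\ w=ZX^{-1}=zx^{-1},
\]
whose defining relations one checks to be $xs=\rho sx$, $wx=\rho xw$ and $sw=\rho^3 ws$. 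This is again a quantum affine $3$-space, and because $(n,3)=1$ it is Cayley--Hamilton of degree $n$ with representations $x\mapsto a e_1$, $s\mapsto b e_2$, $w\mapsto c\,e_2^{-1}e_1^{-3}$ for $(a,b,c)\in\C^3$. The fibre over $(0,0,0,0)$ is cut out by $x^n=0$, i.e.\ by $a=0$, so it is the affine chart $\{(b,c)\}$ of the exceptional $\PP^2$ with $b\sim Y/X$ and $c\sim Z/X$.

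Next I would split this chart according to simplicity of the representation. Using $(n,3)=1$, so that $e_2$ and $e_1^{-3}$ still generate $\wis{M}_n(\C)$, the representation with $a=0$ is simple as soon as $bc\neq 0$, while it degenerates into a direct sum of $n$ one-dimensional representations along the single non-zero generator as soon as $b=0$ or $c=0$. For $bc\neq0$ I would compute the trace preserving normal space $N=\Ext^{tr}_S(M,M)$ directly over $\C[\varepsilon]/(\varepsilon^2)$: the two relations involving $x$ decouple the deformation of $x$ and force it into $\C e_1$, while the relation $sw=\rho^3 ws$ pins down the deformations of $s$ and $w$. The outcome I expect is $\dim N=3$ with the trivial one-vertex, three-loop local quiver, so that both $\wis{trep}^{ss}_n~B$ and the quotient are smooth at these points.

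On the two lines $\{b=0\}$ and $\{c=0\}$ the representation is semi-simple but not simple, and I would determine the marked weighted local quiver together with the stabilizer exactly as in the line blow-up theorem, reading off the singularity $\C\times\C^2/\Z_n$ on the regular part of each line. Gluing the three charts, the non-simple locus becomes $\{X=0\}\cup\{Y=0\}\cup\{Z=0\}=\mathbf{V}(XYZ)\subset\PP^2$, three lines meeting pairwise in the coordinate points. Since the exceptional divisor of the blow-up of the graded maximal ideal is $\wis{proj}~A$ and the point scheme of the quantum plane is precisely this triangle, these three lines are exactly the point modules of $A$, which gives the statement.

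The main obstacle is the smoothness claim at the simple exceptional points ($a=0$, $bc\neq0$). The naive central subring $\C[x^n,s^n,w^n,x^3sw]$ satisfies $g'^n=u'^3v'w'$, whose spectrum is singular along the whole fibre $\{u'=g'=0\}$; taken at face value this would wrongly make every point of the exceptional divisor singular. The resolution is that the true center is the full semigroup ring of invariant monomials $\{(m_1,m_2,m_3):m_2\equiv m_3,\ m_1\equiv 3m_2 \bmod n\}$, i.e.\ $\C^3/H$ with $\lvert H\rvert=n^2$, which is smooth away from the coordinate lines. Equivalently, the $\Ext^{tr}$-computation above must yield exactly $3$ and nothing more; the delicate point there is a rank computation in which $(n,3)=1$ enters essentially, the relevant ``diagonal'' deformation operator failing surjectivity by exactly one dimension, and carrying this out carefully is the technical heart of the proof.
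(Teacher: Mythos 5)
Your proposal is correct, but it takes a genuinely different route from the paper's own proof. The paper argues \emph{upstairs} on $\wis{trep}^{ss}_n~B$: smoothness of the representation variety over $\mathfrak{m}$ away from the three special points is quoted from Theorem 2 of \cite{DeLaetLeBruyn}, and the rest is the formalism of \cite{BocklandtSymens} for weighted local quivers attached to $\wis{PGL}_n\times\C^*$-stabilizers --- at Azumaya points on the cone over the three lines the stabilizer is $\Z_n$ acting with weights $3$ and $-1$ on the normal directions (whence $\C\times\C^2/\Z_n$ after deleting the $\C^*$-loop), while at the three remaining points $N_{\wis{PGL}_n}$ of the semi-simple non-simple representation is computed directly; smoothness elsewhere follows since there the stabilizer is trivial. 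You instead argue \emph{downstairs}, chart by chart on $\wis{proj}~Z(B)$, through the section algebras of $\mathcal{B}$ --- which is exactly the technique the paper deploys only in the theorem \emph{after} this one. Your chart computations agree with the paper's formulas: $\Gamma(\mathbb{X}(X^n),\mathcal{B})$ is the quantum $3$-space with parameters $(\rho,\rho^3,\rho)$, its standard-form representations are $x\mapsto ae_1$, $s\mapsto be_2$, $w\mapsto ce_2^{-1}e_1^{-3}$, simplicity for $a=0,\ bc\neq 0$ uses $(n,3)=1$, and the center is the invariant ring $\C^3/H$ with $H=\{\eta\in\mu_n^3:\eta_1^3\eta_2\eta_3=1\}$ of order $n^2$, which acts freely over the locus $bc\neq 0$ (again by $(n,3)=1$) and with stabilizer $\Z_n$ of weights $(1,-3)$ along the two coordinate lines of the chart --- reproducing both the smoothness claim and the $\C\times\C^2/\Z_n$ singularities, which glue to the triangle $\mathbf{V}(XYZ)$, the point variety of $A$. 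Your closing warning is the real payoff of this approach: the visible subring $\C[u^n,v^n,w^n,u^3vw]$ is a hypersurface $g'^n=u'^3v'w'$ that is singular along the whole exceptional fibre, and mistaking it for the center would contradict the theorem; the paper's equivariant route never meets this trap (that is what it buys, besides re-using the Sklyanin result), whereas your route buys independence from \cite{DeLaetLeBruyn} and from the weighted-quiver machinery, and makes the role of $(n,3)=1$ and the global $\PP^2$-picture completely explicit. Two small points you should still nail down: (i) the three charts really cover the fibre over $\mathfrak{m}$ --- every positive-degree central element of $B$ has an $n$-th power lying in the ideal generated by $X^n,Y^n,Z^n$ up to degree-zero central factors, so a semistable closed orbit over $\mathfrak{m}$ cannot kill all three; (ii) the bound $\dim \Ext^{tr}_S(M,M)\leq 3$ at the simple exceptional points, which you rightly call the technical heart, is obtained most painlessly by the dual-numbers argument of Proposition \ref{prop:normaldecomposition} applied to the subalgebras generated by pairs of the three standard generators, rather than by a bare rank count.
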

\begin{proof}
We have that $\wis{trep}^{ss}_n~B$ is smooth above $\mathfrak{m}$ except for the 3 points  corresponding to the direct sum of $n$ 1-dimensional representations by Theorem 2 of \cite{DeLaetLeBruyn}. However, for Azumaya points with non-trivial $\wis{PGL}_n \times \C^*$-stabilizer (corresponding to points on the cone over the 3 lines except for the singular points), the weighted local quiver setting in $Z(B)$ is given by 
\begin{center}
\begin{tikzpicture}
   \node[vertice] (a) at (0,0) {$1$};
   \tikzset{every node/.style={fill=white}} 
   \path[->] (a) edge[out=30,in=-45,loop]  (a);
   \path[->] (a) edge[out=225,in=150,loop]node{$\boxed{3}$} (a);
   \path[->] (a) edge[out=130,in=50,loop]node{$\boxed{-1}$} (a); 
   \path[->] (a) edge[out=-50,in=-130,loop] (a);  
\end{tikzpicture}
\end{center}
as the $-1$-weight follows from the action of $\Z_n$ on the degree 0 variables.
Therefore, $\wis{proj}~Z(B)$ is not smooth on the corresponding lines (as the singular locus is closed), with everywhere except for the 3 singular points of the lines the singularity type given by $\C \times \C^2/\Z_n$.

For the points in $\wis{trep}^{ss}_n~B$ lying over $\mathfrak{m}$ that correspond to semi-simple but not simple representations, for example $x,y,z \mapsto 0, X,Z \mapsto 0, Y \mapsto b e_2$, $N_{\wis{PGL}_n}$ can be computed to be given by (using the fact that $Tr(yY^i)=0$ for $0\leq i \leq n-2$)
\begin{center}
\begin{tikzpicture}
   \node[vertice] (a) at (0,0) {$1$};
   \node[vertice] (b) at (2,0) {$1$};
   \node[vertice] (c) at (4,-2) {$1$};
   \node[vertice] (d) at (-2,-2) {$1$};
   
\tikzset{every node/.style={fill=white}}   
   
   \path[->] (a) edge[out=40,in=140]node{$\boxed{2}$} (b);
   \path[->] (b) edge[out=-140,in=-40]node{$\boxed{0}$} (a);
   \path[->] (b) edge[out=-20,in=100]node{$\boxed{2}$} (c);
   \path[->] (c) edge[out=150,in=-70]node{$\boxed{0}$} (b);
     
   \path[->] (a) edge[out=-110,in=30]node{$\boxed{0}$} (d);
   \path[->] (d) edge[out=80,in=-160]node{$\boxed{2}$} (a);   
   
   \path[->,dashed] (d) edge[out=20,in=160]node{$\boxed{0}$} (c);
   \path[->,dashed] (c) edge[out=-160,in=-20]node{$\boxed{2}$} (d);
   
   \path[->] (a) edge[out=180,in=130,loop]node{$\boxed{1}$} (a);
   \path[->] (a) edge[out=100,in=50,loop] (a);
   
\end{tikzpicture}
\end{center}
In order to get $N_{\wis{PGL}_n\times \C^*}$, we need to delete the loop of weight 1 (as the only non-trivial $\C^*$-action works on $Y$, which corresponds to the loop of weight 1).

\end{proof}
If we again work with $\mathcal{B}$, the coherent sheaf of Cayley-Hamilton algebras defined by $B$ over $\wis{proj}~Z(B)$, we again need to look at the global sections $\Gamma(\mathbb{X}(X^n),\mathcal{B})$, $\Gamma(\mathbb{X}(Y^n),\mathcal{B})$ and $\Gamma(\mathbb{X}(Z^n),\mathcal{B})$. We find that
$$
\Gamma(\mathbb{X}(X^n),\mathcal{B})\cong \Gamma(\mathbb{X}(Y^n),\mathcal{B})\cong \Gamma(\mathbb{X}(Z^n),\mathcal{B})\cong\frac{\C \langle u,v,w\rangle}{(uv-\rho vu,vw-\rho^3 wv,wu-\rho uw)}
$$
from which it follows that
\begin{theorem}
$\mathcal{B}$ is Azumaya away from the three lines in $\wis{proj}~Z(B)$ lying over $\mathfrak{m}$ and therefore is Cayley-smooth in these points. However, $\mathcal{B}$ is not Cayley-smooth over the 3 lines, with local quiver over a regular point of the 3 lines given by
\begin{center}
\begin{tikzpicture}
   \node[vertice] (a) at (0,0) {$1$};
   \node[vertice] (b) at (2,0) {$1$};
   \node[vertice] (c) at (4,-2) {$1$};
   \node[vertice] (d) at (-2,-2) {$1$};
   \node[vertice] (e) at (0,-4) {$1$};
   
   \path[->] (a) edge[out=0,in=180] (b);
   \path[->] (b) edge[out=-45,in=135] (c);
     
   \path[->] (d) edge[out=45,in=-135] (a);   
   \path[->] (b) edge[out=-115,in=65] (e);   
   \path[->] (e) edge[out=135,in=-45] (d);   
   \path[->] (c) edge[out=180,in=0] (d);   
   
   \path[->,dashed] (c) edge[out=-110,in=-20] (e);
   
   \path[->] (a) edge[out=180,in=90,loop] (a);
   
\end{tikzpicture}
\end{center}
which is the Mckay quiver of $\C^2/\Z_n$ with $Z_n$ acting as the matrix $\begin{bmatrix}
\rho^3 & 0 \\ 0 & \rho^{-1}
\end{bmatrix}$ with one extra loop corresponding to the 1-dimensional family of similar points. 

In the singular points of the 3 lines, the local quiver is determined by 
\begin{center}
\begin{tikzpicture}
   \node[vertice] (a) at (0,0) {$n$};
   
   \path[->] (a) edge[out=30,in=-45,loop] node {$\bullet$} (a);
   \path[->] (a) edge[out=120,in=45,loop] node {$\bullet$} (a);
   \path[->] (a) edge[out=225,in=150,loop] node {$\bullet$} (a);   
   
\end{tikzpicture}
\end{center}
\end{theorem}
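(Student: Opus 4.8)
The plan is to reduce the entire statement to the single graded algebra $R=\Gamma(\mathbb{X}(X^n),\mathcal{B})\cong \frac{\C\langle u,v,w\rangle}{(uv-\rho vu,\,vw-\rho^3 wv,\,wu-\rho uw)}$, since the cyclic symmetry $x\mapsto y\mapsto z\mapsto x$ of $A$ identifies the three charts $\mathbb{X}(X^n),\mathbb{X}(Y^n),\mathbb{X}(Z^n)$ and their sections. First I would record the Cayley--Hamilton structure: as $(n,3)=1$, $R$ is a Cayley--Hamilton algebra of degree $n$ whose centre is $\C[u^n,v^n,w^n,u^3vw]$ with the single relation $(u^3vw)^n=(u^n)^3v^nw^n$, so that $\wis{max}~Z(R)$ is the affine piece of $\wis{proj}~Z(B)$ over the exceptional fibre. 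Its semisimple trace preserving representations come from restricting the standard form of Theorem \ref{th:normalform}: a simple module is $u\mapsto a e_1,\ v\mapsto b e_1^{-1}e_2,\ w\mapsto c e_1^{-1}e_2^{-1}e_1^{-1}$, for which $u^3vw$ acts as a nonzero scalar multiple of $a^3bc$. Since $uv=ab\,e_2$ and $uw=ac\,e_2^{-1}e_1^{-1}$, the triple generates $\wis{M}_n(\C)$ as soon as $b\neq 0$ or $c\neq 0$, while for $b=c=0$ the module splits into the $n$ distinct $1$-dimensional eigenspaces of $u$.

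This already yields the first assertion. Away from $\{b=c=0\}$ the module is simple, so $\mathcal{B}$ is Azumaya and hence Cayley-smooth there; because $(n,3)=1$ the degree genuinely equals $n$ (cf.\ the Remark following the Proposition on $\mathcal{A}$). The locus $b=c=0$ is the line $v^n=w^n=u^3vw=0$, i.e.\ the image of $y=z=0$; running over the three charts these are exactly the three point-module lines of $A$ over $\mathfrak{m}$, meeting two by two.

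On a regular point of such a line the relevant module is $M=\bigoplus_{k}S_k$, the sum of the $n$ eigenspaces of the invertible generator (say $u\mapsto a e_1$, $v=w=0$). I would compute $N=\Ext^{tr}_R(M,M)$ by the dual-number method used throughout the paper: deform $u\mapsto u_0+\varepsilon A,\ v\mapsto \varepsilon B,\ w\mapsto \varepsilon C$ and impose the three relations to first order. The relation $uv=\rho vu$ forces $B$ onto a single off-diagonal, producing one cyclic family of arrows among the $S_k$; the relation $wu=\rho uw$ does the same for $C$ in the opposite direction; the relation $vw=\rho^3 wv$ is second order and so fixes the \emph{weights} of the two families rather than their existence. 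After subtracting the tangent space to the orbit (exactly the off-diagonal part of $A$, since $u_0$ has distinct eigenvalues) and imposing the trace conditions (which collapse the diagonal deformations of $u$ to a single parameter, as for $B=te_2$ in Section 3), $N$ becomes the doubled cyclic quiver on $n$ vertices together with one surviving loop. Tracking the quantum parameters through the two families identifies this with the McKay quiver of $\C^2/\Z_n$ for the action $\diag(\rho^3,\rho^{-1})$, the loop being the promised $1$-dimensional family; comparison with the transverse slice $g^n=p^3qr$ of the centre then gives the singularity type $\C\times\C^2/\Z_n$.

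Finally, at the three pairwise intersections the module degenerates completely to the trivial representation with multiplicity $n$ (all of $u,v,w$ act as $0$), and the local quiver is computed exactly as for the trivial representation of $A$ in Section 3: one vertex of dimension $n$ carrying three loops marked by vanishing trace. As these points are reached only on the overlaps of two charts, I would verify the degeneration there, or simply invoke Theorem 2 of \cite{DeLaetLeBruyn}. The hard part will be the bookkeeping in the regular-point computation: correctly isolating $N_{\wis{PGL}_n}$ from the full tangent space after the trace preserving conditions, and above all pinning down the two weights of the $\Z_n$-action so that the doubled cycle is identified with the McKay quiver of $\diag(\rho^3,\rho^{-1})$ and not some other cyclic quotient --- this is precisely where the asymmetric parameter $\rho^3$ in $vw=\rho^3 wv$ must be handled with care.
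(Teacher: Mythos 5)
Your overall plan---reducing the whole statement to the chart algebra $R=\Gamma(\mathbb{X}(X^n),\mathcal{B})$ and using the cyclic symmetry of the three charts---is exactly the paper's route, and your description of $Z(R)$ and of the standard-form representations is correct. But there is a genuine gap: you have misidentified the three lines lying over $\mathfrak{m}$, and the error propagates through both halves of your argument. In the chart $\mathbb{X}(X^n)$ one has $u=x$, $v=yx^{-1}$, $w=zx^{-1}$, so the fibre of $\wis{max}~Z(R)\rightarrow\wis{max}~Z(A)$ over $\mathfrak{m}=(x,y,z)$ is precisely $\{u^n=0\}$, i.e.\ $a=0$ in your standard form; the exceptional lines visible in this chart are $\{a=0,b=0\}$ and $\{a=0,c=0\}$, both \emph{inside} that fibre. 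Your locus $\{b=c=0\}$ (with $a$ free) is instead the strict transform of the original singular line $\mathbf{V}(y^n,z^n,xyz)$ of $Z(A)$, which meets the fibre over $\mathfrak{m}$ in a single point. Note that your simplicity criterion silently assumes $a\neq 0$: the implication ``$b\neq 0$ or $c\neq 0$ implies simple'' rests on $uv=ab\,e_2$ and $uw=ac\,e_2^{-1}e_1^{-1}$, which vanish when $a=0$. Over the fibre, where $a=0$, the module generated by $v=be_1^{-1}e_2$ and $w=ce_1^{-1}e_2^{-1}e_1^{-1}$ is simple if and only if \emph{both} $b\neq 0$ and $c\neq 0$; this is where $(n,3)=1$ genuinely enters, since $vw$ is a scalar times $e_1^{-3}$ and one needs $\langle e_1^{3}\rangle=\langle e_1\rangle$. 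Hence the non-Azumaya locus over $\mathfrak{m}$ is $\{a=0,\,bc=0\}$---two lines per chart, three lines globally---and not your single line per chart.

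The misidentification becomes fatal in the regular-point computation. At your module ($u\mapsto ae_1$, $v=w=0$) the only relations giving first-order conditions are $uv=\rho vu$ and $wu=\rho uw$; both involve $u$ alone, so both families of arrows are cycles of step $1$ in the eigenbasis of $e_1$, and the local quiver you actually get is the McKay quiver of $\diag(\rho,\rho^{-1})$ (the $A_{n-1}$ quiver), not of $\diag(\rho^3,\rho^{-1})$. Your proposed fix---that the second-order relation $vw=\rho^3wv$ ``fixes the weights''---cannot work: a relation quadratic in the deformed variables imposes no condition on first-order deformations, so the exponent $3$ never enters at your point. At a correct regular point of the exceptional lines, e.g.\ $u=w=0$, $v\mapsto be_1^{-1}e_2$, the two operative relations are $uv=\rho vu$ (constraining the deformation of $u$, a cycle of step $1$) and $vw=\rho^3wv$ (constraining the deformation of $w$, a cycle of step $3$), which is precisely how the asymmetric parameter produces the McKay quiver of $\diag(\rho^3,\rho^{-1})$; the extra loop comes from the diagonal deformations of $v$ surviving the trace conditions, exactly as in the computation $B=te_2$ of Section 3. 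Finally, a small correction to your last step: each pairwise intersection of the three lines is the origin of exactly one chart (e.g.\ the intersection of the lines $y=0$ and $z=0$ lies only in $\mathbb{X}(X^n)$), not on an overlap of two charts; there your conclusion---the trivial representation with multiplicity $n$, hence the three marked loops as for $A$ at the origin---is the right one.
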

Therefore, we see that, while the central singularities have become better, the singularities in $\wis{trep}_n~\mathcal{B}$ have become worse (we now have 3 points with the same type of singularity as the original one).

\end{document}